\def \newell{r}
\def \newDelta{\Delta^*}
\def \newa{a^*}
\def \newb{b^*}
\begin{document}

\newtheorem{thm}{Theorem}
\newtheorem{lem}[thm]{Lemma}
\newtheorem{claim}[thm]{Claim}
\newtheorem{cor}[thm]{Corollary}
\newtheorem{prop}[thm]{Proposition} 
\newtheorem{definition}[thm]{Definition}
\newtheorem{rem}[thm]{Remark} 
\newtheorem{question}[thm]{Question}
\newtheorem{conj}[thm]{Conjecture}
\newtheorem{prob}{Problem}

\newtheorem{lemma}[thm]{Lemma}

\newcommand{\GL}{\operatorname{GL}}
\newcommand{\SL}{\operatorname{SL}}
\newcommand{\lcm}{\operatorname{lcm}}
\newcommand{\ord}{\operatorname{ord}}
\newcommand{\Op}{\operatorname{Op}}
\newcommand{\Tr}{\operatorname{Tr}}
\newcommand{\Nm}{\operatorname{Nm}}

\numberwithin{equation}{section}
\numberwithin{thm}{section}
\numberwithin{table}{section}

\def\vol {{\mathrm{vol\,}}}
\def\squareforqed{\hbox{\rlap{$\sqcap$}$\sqcup$}}
\def\qed{\ifmmode\squareforqed\else{\unskip\nobreak\hfil
\penalty50\hskip1em\null\nobreak\hfil\squareforqed
\parfillskip=0pt\finalhyphendemerits=0\endgraf}\fi}

\def \balpha{\bm{\alpha}}
\def \bbeta{\bm{\beta}}
\def \bgamma{\bm{\gamma}}
\def \blambda{\bm{\lambda}}
\def \bchi{\bm{\chi}}
\def \bphi{\bm{\varphi}}
\def \bpsi{\bm{\psi}}
\def \bomega{\bm{\omega}}
\def \btheta{\bm{\vartheta}}

\newcommand{\bfxi}{{\boldsymbol{\xi}}}
\newcommand{\bfrho}{{\boldsymbol{\rho}}}

\def\Kab{\sfK_\psi(a,b)}
\def\Kuv{\sfK_\psi(u,v)}
\def\SaUV{\cS_\psi(\balpha;\cU,\cV)}
\def\SaAV{\cS_\psi(\balpha;\cA,\cV)}

\def\SUV{\cS_\psi(\cU,\cV)}
\def\SAB{\cS_\psi(\cA,\cB)}

\def\Kmnp{\sfK_p(m,n)}

\def\KKap{\cH_p(a)}
\def\KKaq{\cH_q(a)}
\def\KKmnp{\cH_p(m,n)}
\def\KKmnq{\cH_q(m,n)}

\def\Klmnp{\sfK_p(\ell, m,n)}
\def\Klmnq{\sfK_q(\ell, m,n)}

\def \SALMNq {\cS_q(\balpha;\cL,\cI,\cJ)}
\def \SALMNp {\cS_p(\balpha;\cL,\cI,\cJ)}

\def \SACXMQX {\fS(\balpha,\bzeta, \bxi; M,Q,X)}

\def\SAMJp{\cS_p(\balpha;\cM,\cJ)}
\def\SAMJq{\cS_q(\balpha;\cM,\cJ)}
\def\SAqMJq{\cS_q(\balpha_q;\cM,\cJ)}
\def\SAJq{\cS_q(\balpha;\cJ)}
\def\SAqJq{\cS_q(\balpha_q;\cJ)}
\def\SAIJp{\cS_p(\balpha;\cI,\cJ)}
\def\SAIJq{\cS_q(\balpha;\cI,\cJ)}

\def\RIJp{\cR_p(\cI,\cJ)}
\def\RIJq{\cR_q(\cI,\cJ)}

\def\TWXJp{\cT_p(\bomega;\cX,\cJ)}
\def\TWXJq{\cT_q(\bomega;\cX,\cJ)}
\def\TWpXJp{\cT_p(\bomega_p;\cX,\cJ)}
\def\TWqXJq{\cT_q(\bomega_q;\cX,\cJ)}
\def\TWJq{\cT_q(\bomega;\cJ)}
\def\TWqJq{\cT_q(\bomega_q;\cJ)}

 \def \xbar{\overline x}
  \def \ybar{\overline y}

\def\cA{{\mathcal A}}
\def\cB{{\mathcal B}}
\def\cC{{\mathcal C}}
\def\cD{{\mathcal D}}
\def\cE{{\mathcal E}}
\def\cF{{\mathcal F}}
\def\cG{{\mathcal G}}
\def\cH{{\mathcal H}}
\def\cI{{\mathcal I}}
\def\cJ{{\mathcal J}}
\def\cK{{\mathcal K}}
\def\cL{{\mathcal L}}
\def\cM{{\mathcal M}}
\def\cN{{\mathcal N}}
\def\cO{{\mathcal O}}
\def\cP{{\mathcal P}}
\def\cQ{{\mathcal Q}}
\def\cR{{\mathcal R}}
\def\cS{{\mathcal S}}
\def\cT{{\mathcal T}}
\def\cU{{\mathcal U}}
\def\cV{{\mathcal V}}
\def\cW{{\mathcal W}}
\def\cX{{\mathcal X}}
\def\cY{{\mathcal Y}}
\def\cZ{{\mathcal Z}}
\def\Ker{{\mathrm{Ker}}}

\def\NmQR{N(m;Q,R)}
\def\VmQR{\cV(m;Q,R)}

\def\Xm{\cX_m}

\def \A {{\mathbb A}}
\def \B {{\mathbb A}}
\def \C {{\mathbb C}}
\def \F {{\mathbb F}}
\def \G {{\mathbb G}}
\def \L {{\mathbb L}}
\def \K {{\mathbb K}}
\def \PP {{\mathbb P}}
\def \Q {{\mathbb Q}}
\def \R {{\mathbb R}}
\def \Z {{\mathbb Z}}
\def \fS{\mathfrak S}
\def \fE{\mathfrak E}
\def \fA{\mathfrak A}

\def\GL{\operatorname{GL}}
\def\SL{\operatorname{SL}}
\def\PGL{\operatorname{PGL}}
\def\PSL{\operatorname{PSL}}
\def\li{\operatorname{li}}
\def\sym{\operatorname{sym}}

\def\Mob{M{\"o}bius }

\def\fF{\mathfrak{F}}
\def\M{\mathsf {M}}
\def\T{\mathsf {T}}

\def\e{{\mathbf{\,e}}}
\def\ep{{\mathbf{\,e}}_p}
\def\eq{{\mathbf{\,e}}_q}

\def\\{\cr}
\def\({\left(}
\def\){\right)}
\def\fl#1{\left\lfloor#1\right\rfloor}
\def\rf#1{\left\lceil#1\right\rceil}

\def\Tr{{\mathrm{Tr}}}
\def\Nm{{\mathrm{Nm}}}
\def\Im{{\mathrm{Im}}}

\def \oF {\overline \F}

\newcommand{\pfrac}[2]{{\left(\frac{#1}{#2}\right)}}

\def \Prob{{\mathrm {}}}
\def\e{\mathbf{e}}
\def\ep{{\mathbf{\,e}}_p}
\def\epp{{\mathbf{\,e}}_{p^2}}
\def\em{{\mathbf{\,e}}_m}

\def\Res{\mathrm{Res}}
\def\Orb{\mathrm{Orb}}

\def\vec#1{\mathbf{#1}}
\def \va{\vec{a}}
\def \vb{\vec{b}}
\def \vs{\vec{s}}
\def \vu{\vec{u}}
\def \vv{\vec{v}}
\def \vz{\vec{z}}
\def\flp#1{{\left\langle#1\right\rangle}_p}
\def\T {\mathsf {T}}

\def\sfG {\mathsf {G}}
\def\sfK {\mathsf {K}}

\def\mand{\qquad\mbox{and}\qquad}

 \date{\today}


\title{On character sums with determinants}

\author[\'E. Fouvry] {\'Etienne Fouvry}
\address{D{\'e}partement de Math{\'e}matiques, Universit{\'e} Paris-Saclay,
91405 Orsay Cedex, 
France}
\email{etienne.fouvry@universite-paris-saclay.fr}

\author[I. E. Shparlinski] {Igor E. Shparlinski}
\address{School of Mathematics and Statistics, University of New South Wales, Sydney NSW 2052, Australia}
\email{igor.shparlinski@unsw.edu.au}

\begin{abstract} We estimate weighted character sums with determinants $ad-bc $ of $2\times 2$ matrices
modulo a prime $p$ with entries $a,b,c,d $ varying over the interval $ [1,N]$. Our goal is to obtain 
non-trivial bounds for values of $N$ as small as possible. In particular, we achieve 
this goal, with a power  saving, for $N \ge  p^{1/8+\varepsilon}$ with any fixed $\varepsilon>0$, 
which is very likely to be  the best possible unless the celebrated Burgess bound  is improved.  By other techniques,  we also treat more general sums
but sometimes for larger values of $N$. 
 \end{abstract}

\keywords{Character sum, determinant, Burgess bound}
\subjclass[2020]{11L40, 11N25}

\maketitle

\hfill
{\it On the 50th anniversary of Chen's theorem\/}

\tableofcontents

\section{Introduction and statement of the results} 
\subsection{The original question} In the spring of 2022, S. Ganguly turned the attention of the first named author to the question of obtaining cancellations
in the quadruple sum of Legendre symbols
\begin{equation}\label{Gangsum} \sum_{\vert a \vert \leq x}\  \sum_{\vert b \vert \leq x} \ \sum_{\vert c \vert \leq x}\  \sum_{\vert d \vert \leq x}
\left( \frac{ad-bc}p \right),
\end{equation}
 where $p$ is a large prime, $x$ is as small as possible,  with the view  of counting the analogues of quadratic non-residues in the context of matrices over the finite field  $\mathbb F_p$ of $p$ elements.

 For a prime  $p$ and an integer  $N\geq 2$,  
we  identify $\mathbb F_p$ with the set of integers $\{0, \,1,  \ldots,  p-1\}$
and introduce the following notations,     
\begin{itemize}
\item $\mathcal M_2 (p) $ is the set of $2\times 2$--matrices, with coefficients in $\mathbb F_p$,
\item $ \mathcal M_2^* (p)\, (=  {\rm GL}(2, \mathbb F_p))$ is the subset of $\mathcal M_2 (p)$ containing all the non--singular matrices, 
\item $\mathcal M_2^{\ne \square}(p)$    is the subset of $\mathcal M_2^* (p)$ defined by 
$$
\mathcal M_2^{\ne \square}(p) := 
\left\{A \in \mathcal M_2^* (p) :~A\ne B^2 \text{ for all } B \in \mathcal M_2^* (p) \right\}, 
$$ 
the elements of  $\mathcal M_2^{\ne  \square}(p)$ are called {\it matrices without square root.}
\item ${\mathbf M} (N,p)$ is the subset of $\mathcal M_2 (p)$ defined by 
$$
{\mathbf M }(N,p) : 
=\left\{ 
 \begin{pmatrix}  a \bmod p & b \bmod p\\ c \bmod p & d \bmod p
 \end{pmatrix}:~0\leq a,\, b,\, c,\, d \leq N 
\right\}.
$$
\end{itemize}  

The question now is:

\begin{question}\label{kappa2*}
Estimate the size of the set 
$$
{\mathbf  M } (N,p) \cap \mathcal M_2^{\ne \square}(p),
$$
and in particular,  find a real number $0 <\kappa_2 <1$,    as small as possible,  such that for   sufficiently large $p$,  one has  
\begin{equation}\label{kappa2}
{\mathbf M }(p^{\kappa_2},p)\, \bigcap\, \mathcal M_2^{\ne \square}(p) \ne \emptyset.
\end{equation}
\end{question} 

 The restriction of this question to invertible matrices of $\mathcal M_2 ^* (p)$ comes from the observation that for every $b \bmod p$ with $b \ne 0$, there exists no matrix $B \in \mathcal M_2 (p)$
 such that
 $$
 B^2 =\begin{pmatrix}
 0 & b\\
 0 & 0
 \end{pmatrix}.
 $$
In other words, it is easy to find a matrix   $A \in \mathcal M_2 (p)$ which only has small entries and which is not a square. However, the above method produces a singular matrix. 

Recall now the situation in the context of matrices with dimension one, which means $\mathbb F_p$. Let $\mathfrak z_p$ be    the least integer  $n \geq 2$ which is not a quadratic residue modulo $p$. 
A well-known question in number theory is to give a bound for $\mathfrak z_p$, in the following sense:

\begin{quotation}
{\it Find a real number $0<\kappa_1 <1$, as small as possible,   such that, for sufficiently
large $p$, one has 
\begin{equation}\label{kappa1}
2\leq \mathfrak z_p\leq p^{\kappa_1}. 
\end{equation}
}
\end{quotation}  

It is universally believed  that~\eqref{kappa1} is satisfied by any $0<\kappa_1 <1$.   This conjecture is considered as very difficult. 
 It is time to recall that we know that~\eqref{kappa1} holds for any 
 \begin{equation}\label{4/sqrte}
 \kappa_1 > 1/(4 \sqrt \mathrm {e}).
 \end{equation}  This result 
 is the content of~\cite[Theorem~2]{Buresiduesnonresidues}. It  is a consequence of  the famous Burgess 
 bound~\cite{Buresiduesnonresidues} for  short sums of characters that we recall in Lemma~\ref{Burgessclassical} below, together 
 with a sieving idea of the work of Vinogradov~\cite{Vi}.

 A  first  way to answer  to  Question~\ref{kappa2*} is to use the properties of the determinant and of the Legendre symbol to deduce the inclusion 
 \begin{equation}\label{inclusion}
\left\{ A \in \mathcal M_2^* (p) :~\left(\frac{\det A}p \right) =-1 \right\}  \subseteq \mathcal M_2^{\ne \square}(p),
\end{equation}
but we lose information by this inclusion (see the  \ref{app:A2} below). We are led to study the least element of the left hand side of~\eqref{inclusion}.
If $\kappa_1$ satisfies~\eqref{kappa1} then any  $\kappa_2 > \kappa_1/2$ satisfies~\eqref{kappa2}; this implication  is elementary but we present it here for completeness.

\begin{thm} 
Let $0<\kappa_1<1$ and $C$ such that, for all $p\geq 3$, one has the inequality
\begin{equation}\label{zp<}
2\leq \mathfrak z_p \leq C p^{\kappa_1}.
\end{equation}
Then, for all $p\geq 3$ one has 
\begin{equation}\label{MinterM}
{\mathbf M }\left(\sqrt C p^{\kappa_1/2} +1 ,p\right)\,  \bigcap\, \mathcal M_2^{\ne \square}(p) \ne \emptyset.
\end{equation}
\end{thm}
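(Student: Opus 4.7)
The plan is to use the inclusion~\eqref{inclusion} to reduce the task to producing a single matrix in $\mathbf{M}(N,p)$ (with $N = \sqrt{C}\, p^{\kappa_1/2} + 1$) whose determinant is a quadratic non-residue modulo $p$. Setting $n := \mathfrak{z}_p$, hypothesis~\eqref{zp<} gives $\sqrt{n} \leq \sqrt{C}\, p^{\kappa_1/2} = N-1$, and by the very definition of $\mathfrak{z}_p$ the integer $n$ is a non-residue modulo $p$. Since moreover $0 < n < p$, such a matrix will automatically be non-singular, so the goal reduces to expressing the positive integer $n$ as an integer determinant $ad - bc$ with $a,b,c,d$ non-negative integers all bounded by $N$.

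To achieve this with every entry bounded by $\sqrt{n} + 1$, I will use Euclidean division by an integer close to $\sqrt{n}$. Setting $k := \lceil \sqrt{n}\, \rceil$ and writing $n = qk + r$ with $0 \leq r < k$ gives both $k \leq \sqrt{n} + 1$ and $q \leq n/k \leq \sqrt{n}$. The identity $(q+1)k - (k-r) = qk + r = n$ then yields
$$
n \;=\; \det\begin{pmatrix} q+1 & k-r \\ 1 & k \end{pmatrix},
$$
and the four entries $q+1,\, k-r,\, 1,\, k$ are non-negative integers, each bounded by $\sqrt{n} + 1 \leq N$. Invoking~\eqref{inclusion} on this matrix then gives~\eqref{MinterM}.

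I do not anticipate any genuine obstacle: the argument is elementary once \eqref{inclusion} is in hand. The only point requiring a little care is keeping every entry strictly below $\sqrt{n} + 1$. A symmetric naive attempt---say, taking both diagonal entries equal to $\lceil \sqrt{n}\, \rceil$ and compensating off the diagonal---can force an entry as high as $2\sqrt{n}$; the asymmetric shift above, which uses $q+1 = \lfloor n/k\rfloor + 1$ against $k$, is precisely the trick that brings every entry back into the allowed range.
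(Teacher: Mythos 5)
Your proposal is correct and is essentially the paper's own argument: both express $\mathfrak{z}_p$ as a determinant with entries bounded by $\sqrt{\mathfrak{z}_p}+1$ by dividing $\mathfrak{z}_p$ by $k=\lceil\sqrt{\mathfrak{z}_p}\,\rceil$, and indeed your matrix $\begin{pmatrix} q+1 & k-r\\ 1 & k\end{pmatrix}$ has exactly the same entries as the paper's choice $a=k$, $c=1$, $b\equiv-\mathfrak{z}_p\bmod a$, $d=(\mathfrak{z}_p+b)/a$. The concluding appeal to the inclusion~\eqref{inclusion} and the bound $\sqrt{\mathfrak{z}_p}+1\le\sqrt{C}\,p^{\kappa_1/2}+1$ matches the paper as well.
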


\begin{proof} 
 We  want to express the  least  quadratic non-residue $\mathfrak z_p$ as
 $$
 \mathfrak z_p= ad-bc,
 $$
 with  positive $a$, $b$, $c$ and $d$ of  suitable sizes. So we choose 
 $$
  a= \rf{\mathfrak z_p^{1/2}}\, (\leq \mathfrak z_p^{1/2} +1), \quad b\equiv -\mathfrak z_p \bmod a \quad \text{with  }1  \leq  b \leq a \text{ and }
 c=1.
 $$
 By construction, the number 
$$
 d = \frac {\mathfrak z_p +bc }{a}  
$$
  is an integer  and satisfies the inequalities
 $$
 0  < d \leq \frac{\mathfrak z_p + \rf{\mathfrak z_p^{1/2}}}{\rf{\mathfrak z_p^{1/2}}} \leq \mathfrak z_p^{1/2}+1.
 $$
 Then, by~\eqref{zp<},  we have the inequalities
 $$
 1 \leq a,\, b,\, c,\, d \leq  \sqrt C p^{\kappa_1/2}+ 1
 $$
 and the result follows.
 \end{proof}

  So, in view of~\eqref{4/sqrte},  we have proved  that~\eqref{kappa2} is satisfied with any 
  $$
  \kappa_2 > 1/(8 \sqrt \mathrm {e}).
  $$ 
  
Our purpose now is to use more sophisticated tools to describe the cardinality of the set appearing on the left hand side of~\eqref{MinterM}. The first of these tools
is a lower bound for the number of quadratic non-residues in some interval beginning at $\mathfrak z_p$ (see Lemma~\ref{599}). The second one concerns with the number of
solutions to the determinant equation $\Delta = ad-bc$ (see Lemma~\ref{BettinChandee}). 
We have

\begin{thm} \label{413}
For every  $\varepsilon >0$ there exists  some $C> 0$ and $p_0$, which depend only on $\varepsilon$, such that for every $p\geq p_0$,   for every $N\geq p^{1/8\sqrt \mathrm {e }+\varepsilon}$ one has the inequality
$$
\sharp \, \left({\mathbf{M}} (N,p)\, \bigcap \, \mathcal M_2^{\ne \square} (p)\right) \geq C N^4.
$$
\end{thm}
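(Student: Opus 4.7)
The plan is to exploit the inclusion~\eqref{inclusion}: it suffices to lower bound the number of quadruples $(a,b,c,d)\in [0,N]^4$ for which the integer $\Delta=ad-bc$ satisfies $\left(\Delta/p\right)=-1$. Since $N\leq \sqrt{p}$ for $p$ large, we have $|\Delta|<p$, so the Legendre-symbol condition on the residue class $ad-bc\bmod p$ is an integer condition on $\Delta\in (-p,p)$. Discarding the quadruples with $\Delta\leq 0$, what I want to bound from below is
\begin{equation*}
\sum_{\substack{1\leq \Delta\leq N^2\\ \left(\Delta/p\right)=-1}} r(\Delta),\qquad r(\Delta):=\#\bigl\{(a,b,c,d)\in[1,N]^4:\ ad-bc=\Delta\bigr\}.
\end{equation*}

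Next, the two announced ingredients fit together as follows. The hypothesis $N\geq p^{1/(8\sqrt{\mathrm{e}})+\varepsilon}$ gives $N^2\geq p^{1/(4\sqrt{\mathrm{e}})+2\varepsilon}$, which by~\eqref{4/sqrte} exceeds $\mathfrak z_p$ by a fixed positive power of $p$. I would apply Lemma~\ref{599} to produce a set $\mathcal S$ of quadratic non-residues, contained in some dyadic subinterval of $[\mathfrak z_p, N^2]$, satisfying $|\mathcal S|\gg N^2$; the point is that once the interval is sufficiently long compared with $\mathfrak z_p$, the density of non-residues there is $\tfrac12+o(1)$. Independently, Lemma~\ref{BettinChandee} controls the representation function $r(\Delta)$; the convenient consequence for this argument is a lower bound for the dyadic average $\sum_{\Delta\in I}r(\Delta)$ of the order $N^2\cdot|I|$, consistent with the trivial global identity $\sum_{\Delta>0}r(\Delta)=\tfrac12 N^4+O(N^3)$.

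Combining these two ingredients by positivity — and, if needed, by a dyadic pigeonhole or a Cauchy--Schwarz step to pass from the average of $r(\Delta)$ to a summation restricted to the non-residue subset $\mathcal S$ — then yields $\sum_{\Delta\in\mathcal S}r(\Delta)\gg N^4$, which is the asserted bound. The main obstacle I expect is calibrating Lemma~\ref{599}: for intervals shorter than $p^{1/4}$ the Burgess bound alone yields no cancellation in the Legendre symbol, and the positive-density conclusion for non-residues relies precisely on the Burgess--Vinogradov mechanism behind the exponent $1/(4\sqrt{\mathrm{e}})$. The halving from $1/(4\sqrt{\mathrm{e}})$ to $1/(8\sqrt{\mathrm{e}})$ is then forced by the fact that $\Delta=ad-bc$ ranges over an interval of length $\asymp N^2$ rather than $\asymp N$. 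A secondary technical point is that Lemma~\ref{BettinChandee} is unlikely to give a clean pointwise lower bound for $r(\Delta)$, so the combination must proceed on average rather than term by term.
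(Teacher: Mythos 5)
Your reduction is the same as the paper's: via the inclusion~\eqref{inclusion} one lower-bounds the number of quadruples with $((ad-bc)/p)=-1$, Lemma~\ref{599} supplies $\gg N^2$ admissible values of $\Delta$ (using $N^2\geq p^{1/(4\sqrt{\mathrm e})+2\varepsilon}$), and Lemma~\ref{BettinChandee} supplies the representation counts. The gap is in how you combine them. You propose to work only with an \emph{average} lower bound $\sum_{\Delta\in I}r(\Delta)\gg N^2|I|$ and then pass to the non-residue subset $\mathcal S\subseteq I$ ``by positivity, dyadic pigeonhole or Cauchy--Schwarz.'' No such step exists: $\mathcal S$ has only some positive density $\delta$, possibly small (Lemma~\ref{599} gives density $\geq\delta>0$, not $\tfrac12+o(1)$, at the scale $p^{1/(4\sqrt{\mathrm e})+\varepsilon}$), and an average lower bound over $I$ says nothing about the mass of $r$ on a density-$\delta$ subset --- the mass could sit entirely on the quadratic residues. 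Cauchy--Schwarz and pigeonhole both go the wrong way for a lower bound restricted to a prescribed subset, and the pointwise upper bound $r(\Delta)\ll N^{2+\varepsilon}$ is too weak to subtract the complement. Your closing remark that ``the combination must proceed on average rather than term by term'' is precisely where the argument breaks; the dyadic-pigeonhole localisation of $\mathcal S$ would in any case cost a factor $\log N$.

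The paper's resolution is that Lemma~\ref{BettinChandee} \emph{does} yield a pointwise lower bound once one smooths and restricts the range of $\Delta$. Taking $f=g$ to be bump functions supported on $[N/2,N]$ and $\alpha,\beta$ the indicators of $[N/2,N]$, the weighted count $\mathcal T_\Delta(N,N,N,N)$ is a minorant of $r(\Delta)$; for $2\leq\Delta\leq N^2/1000$ and $n_1/n_2$ close to $1$ the integral in the main term of~\eqref{formulaforTDelta} has measure $\geq cN^2$ (Lemma~\ref{Lebesgue}), the $\gcd$-sum over such $(n_1,n_2)$ is $\gg 1$, and the error term is $O(N^{39/20+\varepsilon_0})=o(N^2)$. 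Hence $\mathcal T_\Delta(N,N,N,N)\gg N^2$ for every such $\Delta$ individually, and summing over the $\gg N^2$ non-residues $\Delta\leq N^2/1000$ provided by Lemma~\ref{599} gives the bound $\gg N^4$. Adding this pointwise step (the restriction $\Delta\leq N^2/1000$, which keeps the supports of $f(\cdot+\Delta/n_2)$ and $g(\cdot/n_1)$ overlapping, is what makes it work) is what your proof is missing.
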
  

Of course,  for every $p$ and every  $1\leq N <p$ one has the inequality 
$$ \sharp \, \left({\mathbf M} (N,p) \, \bigcap \,   \mathcal M_2^{\ne \square} (p)\right)  \leq \sharp {\mathbf M}(N, p) \leq (N+1)^4,
$$
so  Theorem~\ref{413} gives the correct order of magnitude. We  give its proof in
 \S\ref{proofTheorem13} and it is obvious  that its counting process can be generalized 
in several directions. For instance, applying Lemma~\ref{BettinChandee} with  $\alpha_{n_1}$ and $\beta_{n_2}$  to be the characteristic functions of the set of primes,  our argument yields the lower bound
$$
\sharp \, \left({\mathbf M}^\dag (N,p) \, \bigcap \,   \mathcal M_2^{\ne \square} (p)\right) \geq C N^4 (\log p)^{-2},
$$
where ${\mathbf M}^\dag (N,p)$ is the subset  of ${\mathbf M} (N,p)$ obtained by restricting the entries of the second row of the matrix  to prime values.  

The  above questions and results make it natural to ask about the size of  
$\mathcal M_2^{\ne \square}(p)$. Quite surprisingly, as far as we know, this questions has never been addressed in the literature. In \ref{app:A2} we  present some elementary arguments which give the asymptotic formula 
 \begin{equation}
 \label{eq: SquareCount} 
\sharp \mathcal M_2^{\ne \square}(p) = \frac{5}{8} p^4 + O(p^3),
\end{equation}
and in fact with somewhat tedious dealing with several discarded cases (absorbed in the error term $O(p^3)$), 
one can easily derive an explicit closed form formula for $\sharp \mathcal M_2^{\ne \square}(p)$.

Finally, one may also consider the $\GL(2, \mathbb{F}_p)$ analogue of primitive roots modulo a prime and ask questions similar to the ones considered above. A study of such questions has been initiated in~\cite{GaRa}. 
See, in particular,~\cite[\S1.5]{GaRa}. 
   \subsection{Oscillation of characters} When studying the least element of the left hand side of~\eqref{inclusion}, 
 we are naturally led  to search for cancellations in the sum  of Legendre symbols 
 introduced in~\eqref{Gangsum}
and more generally in the sum
$$
S(N, \chi) := 
   \sum_{1\leq a\leq N} \sum_{1\leq b \leq N} \sum_{1\leq c \leq N} \sum_{1\leq d \leq N}  \chi (ad-bc),
$$
   where $\chi$ is a non-principal multiplicative character modulo $p$, 
   we refer to~\cite[Section~3]{IwKow} for a background on characters.  
      
   Before stating the other results, we recall the following convention: As usual, the notations  $F \ll G$ and $F = O(G)$, are equivalent
to $|F|  \le c G$ for some constant $c>0$,
which throughout the paper may depend on the real $\varepsilon>0$ (with or without subscripts).  
   
   The results that we  present can be extended to the larger set of summation $0\leq a,\, b,\, c,\, d\leq N$, since the contribution of the terms with $abcd =0$ is  easily treated. 
   Keeping the variables $a$, $b$ and $c$ fixed, and applying the Burgess bound~\cite{Buresiduesnonresidues} (see 
   Lemma~\ref{Burgessclassical} below) to the sum in $d$, we deduce that, for every $\varepsilon >0$ there exists $\delta$, such that, for $N\geq p^{\frac 14 +\varepsilon}$ one has the bound $S(N, \chi) \ll N^{4-\delta}$.
 By this trivial  approach, we did not benefit from cancellations coming from the other variables. Our next purpose is   to efficiently take advantage of the other three variables.

  Actually we do not benefit from the smoothness of each of these variables, some of them can be attached with general coefficients. We also work with the sum of the modulus
  of some interior sums. Clearly, all these cases contain $S(N, \chi)$ as a particular case.
  
  The first sum that we study appealing to the properties of the determinant equation  is
  $$
  U_\chi (\boldsymbol \alpha, \boldsymbol \beta, N ) := \sum_{1\leq a\leq N}\alpha_a \sum_{1\leq b \leq N} \beta_b \sum_{1\leq c \leq N} \sum_{1\leq d \leq N} \chi (ad-bc),
  $$ 
  where $\boldsymbol \alpha =(\alpha_a)$ and $\boldsymbol \beta= (\beta_b)$ are quite general sequences.

   \begin{thm}\label{thesumU} For every  $\varepsilon >0$ there exist $\delta_0 >0$ such that the following inequality holds
   $$
   U_\chi  (\boldsymbol \alpha, \boldsymbol \beta, N ) \ll  N^{4-\delta_0},
   $$
   uniformly for bounded weights $\boldsymbol \alpha$ and $\boldsymbol \beta$, for all $a$ and $b \geq 1$
   and uniformly for $p^{1/2} > N\geq p^{1/8 + \varepsilon}$.
   \end{thm}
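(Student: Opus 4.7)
The plan is to exploit the bilinear, determinantal structure of the inner sum in $(c, d)$ and appeal to a bilinear character sum estimate of Karatsuba--Burgess type. The weights $\boldsymbol\alpha, \boldsymbol\beta$ on the outer variables $(a, b)$ play essentially no role beyond the trivial $\ell^\infty$ bound.

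First I would write
$$
U_\chi(\boldsymbol \alpha, \boldsymbol \beta, N) = \sum_{1 \leq a, b \leq N} \alpha_a \beta_b\, V(a, b), \qquad V(a, b) := \sum_{1 \leq c, d \leq N} \chi(ad - bc).
$$
Since $N < p^{1/2} < p$, every $a, b \in [1, N]$ satisfies $\gcd(ab, p) = 1$. For such $a, b$, the substitutions $m \equiv a d \pmod p$ and $n \equiv b c \pmod p$ are bijections on $\F_p^*$, so that
$$
V(a, b) = \sum_{m \in \cM_a} \sum_{n \in \cN_b} \chi(m - n),
$$
where $\cM_a, \cN_b \subset \F_p$ are arithmetic progressions of length $N$ with common differences $a$ and $b$, respectively. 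This places $V(a,b)$ squarely in the setting of bilinear character sums studied since Karatsuba.

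The key input, then, is a Karatsuba--Burgess bilinear character sum estimate of the form
$$
|V(a, b)| \ll N^{3/2 - 1/(2r)}\, p^{(r+1)/(4r^2) + o(1)},
$$
valid for any integer $r \geq 1$, uniformly in $a, b$ coprime to $p$. Optimizing $r$ in terms of $\varepsilon$ (for instance, $r = 5$ already suffices when $N \geq p^{1/8 + \varepsilon}$), this yields a pointwise bound $|V(a, b)| \ll N^{2 - \delta_0}$ with some $\delta_0 = \delta_0(\varepsilon) > 0$. Summing over $(a, b)$ with the trivial bound $|\alpha_a \beta_b| \ll 1$ then gives
$$
|U_\chi(\boldsymbol \alpha, \boldsymbol \beta, N)| \leq \sum_{1 \leq a, b \leq N} |\alpha_a \beta_b| \cdot |V(a, b)| \ll N^2 \cdot N^{2 - \delta_0} = N^{4 - \delta_0},
$$
which is the desired inequality.

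The principal obstacle is the bilinear estimate itself. A naive single-variable Burgess bound applied to either the inner $m$-sum or the inner $n$-sum gives savings only when $N \geq p^{1/4 + \varepsilon}$. Bringing the threshold down to $N \geq p^{1/8 + \varepsilon}$ requires exploiting the averaging over both $m \in \cM_a$ and $n \in \cN_b$ \emph{simultaneously}, through Hölder's inequality combined with multiplicative shifts in the Burgess method. Tracking the parameters carefully --- in particular, the condition $\log p / \log N < 2r$ under which the bilinear Karatsuba--Burgess estimate becomes non-trivial --- is the main technical work underlying the proof.
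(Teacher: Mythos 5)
Your reduction to a pointwise bound on $V(a,b)=\sum_{1\le c,d\le N}\chi(ad-bc)$ contains the fatal gap: the ``Karatsuba--Burgess bilinear estimate'' $|V(a,b)|\ll N^{3/2-1/(2r)}p^{(r+1)/(4r^2)+o(1)}$, claimed uniformly in $a,b$, is not a known result, and in the range $N\asymp p^{1/8+\varepsilon}$ it cannot be true with current technology. Test it at $a=b=1$: then
$$
V(1,1)=\sum_{1\le c,d\le N}\chi(d-c)=\sum_{0<|k|<N}(N-|k|)\,\chi(k),
$$
and the values $d-c$ occupy an interval of length only $2N\approx p^{1/8+\varepsilon}$, far below the Burgess threshold $p^{1/4}$. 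Since the least quadratic non-residue may, for all we can prove, exceed $p^{1/(4\sqrt{\mathrm e})}>p^{1/8}$, it is consistent with present knowledge that $\chi(k)=1$ for all $1\le k\le N$, in which case $V(1,1)\gg N^2$ when $\chi(-1)=1$. So a uniform bound $|V(a,b)|\ll N^{2-\delta_0}$ would already improve the Vinogradov--Burgess bound on the least non-residue; your key input is therefore not something you can cite or prove, and the subsequent trivial summation over $(a,b)$ rests on it entirely. The change of variables to two arithmetic progressions $\cM_a,\cN_b$ does not help: a double character sum over two progressions each of length $p^{1/8+\varepsilon}$ (equivalently, over a rank-two generalized progression of that shape) is exactly the kind of object for which no nontrivial bound is known below the $p^{1/4}$ (one long variable) or $p^{1/2}$ (product of lengths, via the Davenport--Erd\H{o}s/H\"older route, whose $D^{2\nu}p^{1/2}$ term is lethal here) thresholds.

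The paper avoids any pointwise treatment of $V(a,b)$ precisely because the averaging over $a$ and $b$ is indispensable. It groups the quadruple sum by the value of the determinant $\Delta=ad-bc$, which ranges over an interval of length about $N^2\ge p^{1/4+2\varepsilon}$, and uses the Bettin--Chandee asymptotic for the determinant equation (after dyadic and smooth partitions of the four variables) to show that the multiplicity $\mathcal T^*_\Delta(N)$ equals a slowly varying main term plus a power-saving error. Partial summation then reduces the main term to classical Burgess sums $\sum_{\Delta\le u}\chi(\Delta)$ with $u$ up to $N^2$, which is exactly where Lemma~\ref{Burgessclassical} becomes available; this is the real source of the exponent $1/8=\frac12\cdot\frac14$. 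If you want to repair your argument, you must keep the sum over $\Delta$ of length $N^2$ intact (or at least of length $\ge p^{1/4+\varepsilon}$, which forces you to handle the pairs $(a,b)$ with $\max(a,b)$ small separately and to control $\gcd(a,b)$), rather than fixing $a,b$ and hoping for cancellation in $(c,d)$ alone.
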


   Since, with the above conditions, we have $\vert ad -bc \vert \leq N^2 $, the exponent $1/8$ somehow is optimal by comparison with the critical exponent $1/4$ appearing in Burgess bound mentioned in Lemma~\ref{Burgessclassical} below.
   
The second sum that we  study is 
   $$  T_\chi( A,B,C, \cD;\balpha) : =    \sum_{1 \le a \le A} \sum_{1\le b \le  B} \sum_{1\le c\le C} 
 \left| \,\sum_{d \in \cD} \alpha_d \chi(ab-cd)\, \right|, $$ 
   where $A$, $B$, $C$ and $D$ are integers $ \geq 1$, $\cD\subseteq \mathbb F_p^*$ has cardinality $D$, and where $\chi$ is a non-principal character modulo $p$.   As we have 
   mentioned, we identify $\mathbb F_p^*$ with the set of integers $\{ 1, \,2,  \ldots, p-1\}$.

  For this sum, where no variable of summation is smooth,  we  prove the following result.
  
  \begin{thm}\label{thm:SumTD1} For every integer  $\nu \geq 1$, there exists a constant $C(\nu)$ such that,  for all prime $p$, for all non-principal character $\chi$ modulo $p$,
for all   positive integers $A$, $B$, $C$ and $D$ satisfying 
$$ABC\text{ and } D < p, 
$$
 for all set $\cD \subseteq \F_p^*$ 
of cardinality $\sharp\cD = D$, 
 and for all  arbitrary complex weights $\balpha$ satisfying 
 \begin{equation}\label{leq1}
 \vert \alpha_d\vert \leq 1 \text{ if } d\in \mathcal D,
 \end{equation}
  we have the inequality 
\begin{align*}
T_\chi(& A,B,C, \cD;\balpha)  \\
& \leq C(\nu) ABCD \( \(\frac{p}{ABC}\)^{1/(2\nu)} D^{-1/2} +  \(\frac{p^{1/2}}{ABC}\)^{1/(2\nu)}\)
 (\log p)^{4/\nu}. 
\end{align*}
 
\end{thm}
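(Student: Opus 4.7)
The plan is to prove the bound by a classical Burgess-type $(2\nu)$-th moment argument. First, by duality one selects phases $\eta_{a,b,c}$ with $|\eta_{a,b,c}|\le 1$ so that
\[
T_\chi(A,B,C,\cD;\balpha) = \sum_{a,b,c} \eta_{a,b,c}\sum_{d\in\cD}\alpha_d\chi(ab-cd),
\]
and then applies H\"older's inequality with exponent $2\nu$ to the outer triple sum to obtain
\[
|T_\chi|^{2\nu} \le (ABC)^{2\nu-1}\,\Sigma, \quad \text{where } \Sigma := \sum_{a,b,c}\biggl|\sum_{d\in\cD}\alpha_d\chi(ab-cd)\biggr|^{2\nu}.
\]

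Next I would expand the $(2\nu)$-th moment, writing
\[
\Sigma = \sum_{(\vec d,\vec e)\in \cD^\nu\times\cD^\nu}\alpha_{d_1}\cdots\alpha_{d_\nu}\overline{\alpha_{e_1}\cdots\alpha_{e_\nu}}\cdot V(\vec d,\vec e),
\]
with
\[
V(\vec d,\vec e) := \sum_{a,b,c}\chi\biggl(\frac{\prod_{i=1}^\nu(ab-cd_i)}{\prod_{j=1}^\nu(ab-ce_j)}\biggr),
\]
and splitting the $(\vec d,\vec e)$-tuples into \emph{diagonal} (those with $\{d_1,\ldots,d_\nu\}=\{e_1,\ldots,e_\nu\}$ as multisets, for which the rational function is identically $1$) and \emph{off-diagonal}. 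Diagonal tuples contribute $\ll_\nu D^\nu\cdot ABC\le D^\nu\cdot p$ by the hypothesis $ABC<p$, and this is what produces the first term of the claimed bound.

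For the off-diagonal tuples I would bound $V(\vec d,\vec e)$ by completing the incomplete sums over $a$, $b$, $c$ with additive characters. The key structural observation is that the rational function $F_{\vec d,\vec e}(a,b,c)=\prod_i(ab-cd_i)/\prod_j(ab-ce_j)$ depends on $(a,b,c)$ only through the ratio $ab/c$ (for $c\ne 0$), and this degeneracy forces extra cancellation beyond the naive Deligne estimate. After completion, the substitution $a=u/b$ turns the inner $b$-sum into a Kloosterman-type sum, bounded by $O(\sqrt p)$ via Weil's theorem, and the remaining integration is a complete character sum of $\chi$ composed with a one-variable rational function, again $O(\sqrt p)$ by Weil. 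Plugging these estimates back into $\Sigma$, taking $(2\nu)$-th roots and multiplying by $(ABC)^{1-1/(2\nu)}$ yields the two terms of the claim; the factor $(\log p)^{4/\nu}$ arises from the standard $\log p$-losses in the various completion steps, taken to the $(2\nu)$-th root.

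The main obstacle is obtaining a sharp off-diagonal bound on $V(\vec d,\vec e)$. A direct three-variable Deligne-type estimate yields only $|V|\ll p^{3/2}$, which would translate into a weaker $p^{3/(4\nu)}$ rather than the claimed $p^{1/(4\nu)}$ in the second term. Exploiting the degeneracy $F=H(ab/c)$ for a one-variable rational function $H$ (depending on $\vec d,\vec e$), together with the Kloosterman bound on the completed $b$-sum, is what saves the extra factor of $p$. Secondary care is needed in treating the boundary case $c=0$ and the zero/pole locus of $H$, both of which contribute only lower-order terms once the off-diagonal hypothesis guarantees $H\not\equiv 1$.
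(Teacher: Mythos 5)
There is a genuine gap in your off-diagonal analysis, and it is fatal to the claimed exponent. After your H\"older step you need, for each off-diagonal tuple $(\vec d,\vec e)$, the bound $|V(\vec d,\vec e)|\ll p^{1/2}(\log p)^{O(1)}$: indeed the target $T^{2\nu}\ll (ABC)^{2\nu-1}\bigl(D^\nu p+D^{2\nu}p^{1/2}\bigr)(\log p)^{8}$ forces the off-diagonal part of $\Sigma$ to be $\ll D^{2\nu}p^{1/2}(\log p)^{8}$. But $V(\vec d,\vec e)$ is an \emph{incomplete} sum over the box $[1,A]\times[1,B]\times[1,C]$, and your proposed route (completion in $a,b,c$, then Kloosterman plus Weil) cannot reach $p^{1/2}$. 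Writing $H(u)=\prod_i(u-d_i)/\prod_j(u-e_j)$, the completed sums $S(h_1,h_2,h_3)=\sum_{a,b,c}\chi(H(ab/c))\e_p(h_1a+h_2b+h_3c)$ are of size $p^{3/2}$ at best for nonzero frequencies --- and in fact $S(0,0,0)=(p-1)^2\sum_{\lambda}\chi(H(\lambda))$ is genuinely of order $p^{5/2}$ for generic off-diagonal tuples, while already $S(h_1,0,0)$ with $h_1\ne 0$ is of order $p^{3/2}$. Chaining the Kloosterman bound $O(\sqrt p)$ for the $b$-sum with the Weil bound $O(\sqrt p)$ for the $\lambda$-sum through a triangle inequality over the remaining complete variable only yields $O(p^{5/2})$, not $O(p^{1/2})$; there is no mechanism in your argument that produces cancellation beyond square root in a complete three-variable sum. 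The best rescue along your lines, namely $|V|=|\sum_\lambda I(\lambda)\chi(H(\lambda))|\le(\sum_\lambda I(\lambda)^2)^{1/2}(p)^{1/2}\ll (ABC)^{1/2}p^{1/2}(\log p)^4$ via Cauchy--Schwarz and the divisor bound, still only gives the weaker second term $(p^{1/2}/(ABC)^{1/2})^{1/(2\nu)}$ in place of $(p^{1/2}/(ABC))^{1/(2\nu)}$.

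The paper sidesteps incomplete multi-variable sums entirely. One first collects the triples according to $\lambda\equiv ab/c\bmod p$, setting $I(\lambda)=\sharp\{(a,b,c):ab/c\equiv\lambda\}$, and then applies a \emph{three}-factor H\"older with exponents $\bigl(\tfrac1{2\nu},1-\tfrac1\nu,\tfrac1{2\nu}\bigr)$ to $\sum_\lambda |\sum_d\alpha_d\chi(\lambda-d)|\cdot I(\lambda)^{1-1/\nu}\cdot I(\lambda)^{1/\nu}$. This places the $2\nu$-th moment of the $d$-sum over the \emph{complete} range $\lambda=1,\dots,p-1$, where the Davenport--Erd\H{o}s lemma (a one-variable Weil input) gives $(2\nu D)^\nu p+2\nu D^{2\nu}p^{1/2}$ --- the factor $p$ in the first term being exactly the price of completing the $\lambda$-range, and the source of the first term of the theorem. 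All information about the box is then carried by $\sum_\lambda I(\lambda)^2$, which under the hypothesis $ABC<p$ reduces the congruence $a_1b_1c_2\equiv a_2b_2c_1\bmod p$ to an integer equality and is bounded by $\sum_{m\le ABC}\tau_3(m)^2\ll ABC(\log p)^8$. If you want to keep your moment-expansion framework, you must find a comparable way to decouple the box from the character oscillation; as written, the step ``off-diagonal $V\ll p^{1/2}$'' is unsupported and the argument does not close.
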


Returning to Theorem~\ref{thm:SumTD1} and choosing $\nu$ sufficiently large (depending on $\varepsilon$), we deduce  the following upper bound. 

\begin{cor}\label{cor:SumTD1}
There exists a constant $c >0$, such that, for every  $\varepsilon>0$, for every prime $p$,
for  every non-principal character $\chi$, for every  positive integers $A$, $B$, $C$  and $D$ with  
$$
p > ABC \ge  p^{1/2 + \varepsilon} \text{ and } D\ge p^{\varepsilon},
$$
for every  $\cD \subseteq \F_p^*$ 
of cardinality $\sharp\cD = D$, for every 
  complex weights $\balpha$ satisfying~\eqref{leq1}  we have the inequality
$$
T_\chi( A,B,C, \cD;\balpha)  \ll   ABCD  p^{- c \varepsilon^2}.
$$
\end{cor}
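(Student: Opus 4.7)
The plan is to deduce Corollary~\ref{cor:SumTD1} directly from Theorem~\ref{thm:SumTD1} by choosing the integer parameter $\nu$ as a suitable function of $\varepsilon$. One may assume $\varepsilon<1/2$, since otherwise the hypotheses are inconsistent with $D<p$ or the statement is trivial after adjusting the implied constant. Under the hypotheses $ABC\ge p^{1/2+\varepsilon}$ and $D\ge p^{\varepsilon}$, the two terms in the parenthesis of Theorem~\ref{thm:SumTD1} satisfy
$$
\Bigl(\frac{p}{ABC}\Bigr)^{1/(2\nu)} D^{-1/2} \le p^{(1/2-\varepsilon)/(2\nu)\,-\,\varepsilon/2}
\mand
\Bigl(\frac{p^{1/2}}{ABC}\Bigr)^{1/(2\nu)} \le p^{-\varepsilon/(2\nu)}.
$$
As $\nu$ grows the first bound deteriorates while the second improves, so the whole task is to balance the two.

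I would take $\nu=\lfloor 10/\varepsilon\rfloor$, a positive integer under our assumption on $\varepsilon$, which satisfies $\varepsilon/20\le 1/(2\nu)\le\varepsilon/18$. With this choice the first term is at most $p^{\varepsilon/36-\varepsilon/2}\le p^{-\varepsilon/3}$, and the second is at most $p^{-\varepsilon^2/20}$. For small $\varepsilon$ the second dominates and therefore dictates the final saving, giving
$$
T_\chi(A,B,C,\cD;\balpha)\ll ABCD\, p^{-\varepsilon^2/20}\,(\log p)^{4/\nu}.
$$
It is at this step that the quadratic exponent $c\varepsilon^2$ of the corollary (rather than linear in $\varepsilon$) becomes forced: the second summand prevents $\nu$ from exceeding a constant multiple of $1/\varepsilon$, and with such $\nu$ only a saving of order $\varepsilon/\nu\asymp\varepsilon^2$ can be extracted from it.

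It remains to absorb the polylogarithmic factor $(\log p)^{4/\nu}$. Since $4/\nu=O(\varepsilon)$, one has $(\log p)^{4/\nu}=p^{O(\varepsilon\log\log p/\log p)}$, which is smaller than $p^{\varepsilon^2/40}$ as soon as $p\ge p_0(\varepsilon)$; for primes $p\le p_0(\varepsilon)$ the inequality $T_\chi\le ABCD\,p^{-c\varepsilon^2}$ holds trivially after inflating the $\varepsilon$-dependent implied constant. Choosing $c=1/40$ then yields the corollary, with $c$ independent of $\varepsilon$ as required. No substantial obstacle arises: the only care needed is in separating the asymptotic range $p\ge p_0(\varepsilon)$, where the logarithmic factor is absorbed into a slightly smaller exponent, from the finitely many small primes, handled trivially.
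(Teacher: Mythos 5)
Your proposal is correct and is exactly the paper's (one-line) argument: the paper simply invokes Theorem~\ref{thm:SumTD1} with $\nu$ sufficiently large depending on $\varepsilon$, and your explicit choice $\nu=\lfloor 10/\varepsilon\rfloor$ with the two term-by-term estimates and the absorption of $C(\nu)(\log p)^{4/\nu}$ fills in the details faithfully. One cosmetic slip: as $\nu$ grows the \emph{first} term improves (it tends to $D^{-1/2}\le p^{-\varepsilon/2}$) while the \emph{second} deteriorates, the opposite of what you wrote; this does not affect your verification, which is carried out explicitly for the chosen $\nu$ and is arithmetically sound.
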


Corollary~\ref{cor:SumTD1} immediately leads to a bound for the original sum $S(N, \chi)$, by choosing $A=B=C=D= N$, but the obtained bound is interesting for $N >p^{1/6+ \varepsilon}$
which is worse than what we obtain by Theorem~\ref{thesumU}.

  The last sum we are concerned with  is   
$$
  T_\chi( N) :=   \sum_{1 \le a \le N} \sum_{1\le b \le  N} \sum_{1\le c\le N}   \left| \sum_{1\le d\le N}  \chi(ab-cd)\right|,
$$
  where $N\geq 1$. This sum is more general than  $U_\chi (\boldsymbol \alpha, \boldsymbol \beta, N )$. It is treated by different methods to obtain    the following upper bound.
 
   \begin{thm}\label{thm:SumTN1}
For any    $\varepsilon>0$, one has the inequality 
$$
T_\chi( N)  \ll   N^4   \frac{\log \log p}{\log p},
$$ for every prime $p$,
for every non-principal character $\chi$ modulo $p$, and for every  
\begin{equation}\label{N>p1/8}
p >N  \ge  p^{1/8 + \varepsilon}.
\end{equation} 
\end{thm}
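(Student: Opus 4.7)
The plan is to reduce $T_\chi(N)$ to a sum of absolute values of short character sums over intervals of $\mathbb{F}_p$, exploit the multiplicative parametrization $f = abc^{-1} \bmod p$ to bring a length-$N^2$ interval (inside the Burgess range) into play, and balance trivial and moment estimates via a suitable threshold decomposition.

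First, for $c$ coprime to $p$ (automatic here since $c \leq N < p$), I would write
$$\sum_{d=1}^N \chi(ab-cd) \;=\; \chi(c)\, g(abc^{-1} \bmod p), \qquad g(f) := \sum_{d=1}^N \chi(f-d),$$
so that $T_\chi(N) = \sum_{1 \leq a,b,c \leq N} |g(f_{a,b,c})|$ with $f_{a,b,c} := abc^{-1} \bmod p$. Fixing a threshold $V>0$ and the exceptional set $\mathcal{F}_V := \{f \in \mathbb{F}_p : |g(f)| > V\}$, I would use that $ab \leq N^2 < p$ forces the congruence $ab \equiv cf \pmod p$ to be an integer equation (so each $f$ is hit by at most $N \cdot \max_{m \leq N^2} \tau(m) \leq N^{1+o(1)}$ triples) to split
$$T_\chi(N) \;\leq\; V N^3 \;+\; N^{2+o(1)} \cdot |\mathcal{F}_V|.$$

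To estimate $|\mathcal{F}_V|$, I would apply Chebyshev to a $2k$-th moment: $|\mathcal{F}_V| \ll V^{-2k} \sum_{f \in \mathbb{F}_p} |g(f)|^{2k}$. Expanding and computing $\sum_f \chi(R(f))$ via Weil's bound for character sums of rational functions (with $R$ of degree $2k$) gives a diagonal contribution of order $(2k)!\,N^k p$ and a Weil-error contribution of order $(2k)^{2k} N^{2k} p^{1/2}$. A clean optimisation over $V$ and $k$ would yield the theorem, were it not for this Weil-error term.

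The main obstacle is precisely the $p^{1/2}$ factor: for $N < p^{1/4}$ it dominates the diagonal and the bare moment bound gives nothing nontrivial. To overcome this, I would estimate $\sum_{1 \leq a,b,c \leq N} |g(f_{a,b,c})|^{2k}$ directly, expanding and substituting $m = ab$ so that the off-diagonal contribution reduces to weighted character sums
$$\sum_{m \in [1, N^2]} h(m)\, \chi(R_c(m)), \qquad h(m) := \#\{(a,b) \in [1,N]^2 : ab = m\},$$
where $R_c$ is a M\"obius-type rational function of degree $2k$ in $m$. The key point is that $N^2$ lies inside the Burgess range (which dictates the $p^{1/8+\varepsilon}$ threshold of the theorem), so a weighted Burgess bound replaces the fatal $p^{1/2}$ by a saving of the form $N^{-\delta(k)}$. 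Optimising $V \asymp N \log\log p/\log p$ against a moment of order $k \asymp \log p/\log\log p$ then balances the two contributions and delivers the $\log\log p/\log p$ improvement rather than a cleaner power saving: it is the slow decay of $\delta(k)$ with the degree $2k$ of $R_c$ that keeps the saving merely logarithmic. Making this weighted Burgess step quantitative with explicit control in the degree of $R_c$ is the technical heart of the argument.
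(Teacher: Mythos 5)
Your opening reductions are sound: the identity $\chi(ab-cd)=\chi(c)\chi(ab/c-d)$, the observation that $ab\equiv cf \pmod p$ with $ab\le N^2<p$ forces an integer equation so that each $\lambda$ has at most $N^{1+o(1)}$ preimages, and the diagnosis that the complete $2k$-th moment $\sum_{f\in\F_p}|g(f)|^{2k}\ll (2kN)^k p + (2k)N^{2k}p^{1/2}$ (this is exactly Lemma~\ref{lem: D-E Lem*1}) is useless for $N<p^{1/4}$ because of the $p^{1/2}$ term. But the step you then rest the whole theorem on --- a ``weighted Burgess bound'' for $\sum_{m\le N^2}h(m)\,\chi(R_c(m))$, where $h$ is a divisor-type weight and $R_c$ is a rational function of degree $2k$ with $k\asymp \log p/\log\log p$ --- is not an available tool, and there is no reasonable prospect of supplying it. The Burgess method is rigidly tied to unweighted sums over intervals (its amplification step shifts $n\mapsto n+qt$ and exploits that an interval is almost invariant under such shifts; an arbitrary weight $h(m)$, even a divisor function, destroys this). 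Moreover, Burgess-type bounds for $\chi(R(n))$ with $R$ a general rational function are essentially open even for fixed small degree outside of special shapes, let alone with degree growing like $\log p$ and with explicit degree dependence. So the entire saving of the theorem is deferred to a lemma that does not exist; this is a genuine gap, not a technicality.

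The paper proceeds quite differently and never needs Burgess in this range (after using Corollary~\ref{cor:SumTD1} to reduce to $N\le 0.5\,p^{1/4}$, so that $N^4<p$). Following Korolev's decomposition idea, one factors out of $d$ a prime $\ell\in(x,y]$ with $x=p^{\eta}$, $y=p^{\varepsilon}$, writing $d=\ell m$; the integers $d$ with no such prime factor contribute $O(\eta N^4)\asymp N^4\log\log p/\log p$ by a sieve bound, and this is the source of the final saving. For the rest, the inner variable becomes the short prime $\ell\le p^{\varepsilon}$ and the outer variable $\lambda=ab/(cm)$ is parametrized by \emph{four} variables whose multiplicative energy $\sum_\lambda J(\lambda)^2\ll (N^4/y)\log^{15}N$ is controlled by divisor bounds precisely because $N^4<p$ turns the congruence $a_1b_1c_2m_2\equiv a_2b_2c_1m_1$ into an integer equation. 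H{\"o}lder plus Lemma~\ref{lem: D-E Lem*1} then wins because $D=y$ is tiny while the outer set has size $N^4/y\ge p^{1/2+3\varepsilon}$, so the $D^{2\nu}p^{1/2}$ term is absorbed. If you want to salvage your threshold strategy, you would need to replace your proposed Burgess step by some such bilinear amplification; as written, the argument does not close.
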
 

 It is worth noticing that the condition~\eqref{N>p1/8} co\" incides with the condition appearing in Theorem~\ref{thesumU}.

\begin{rem} We note that our use of absolute values in the above character sums 
is equivalent to attaching weights (to the variables outside of these absolute values)
which are bounded in the $L_\infty$-norm.
Simple modifications of our arguments also allow to introduce weights for which we control 
$L_1$- and $L_2$-norms. \end{rem} 
  \section{Conventions and classical tools} 
   \subsection{Conventions} 
   \label{sec:conv} 
 The additive character with period $1$ is denoted by
$$
\xi \in \mathbb R \mapsto \e(\xi) =  \exp\(2\pi \mathrm{ i} \xi\).
$$

For a finite set $\cS$ we use $\sharp\cS$ to denote its cardinality. 

In summation ranges we sometimes write the Legendre symbol modulo of some bulky expression $\fF$  as $(\fF/p)$.   

 For $Y \geq 1$, the characteristic function of the interval $(Y/2, Y]$ is denoted by $\mathbf 1_Y$.

The letter $p$ is reserved for prime numbers.

   \subsection{Around Burgess bound}
   
The Burgess bound~\cite{Buresiduesnonresidues} concerns short sums of values of characters over consecutive  integers. A typical modern version of this bound is of the following form (see, for instance,~\cite[Theorem~12.6 and Equation~(12.58)]{IwKow}).

   \begin{lemma} \label{Burgesslog} For every $r\geq 2$, there exists two  constants $c(r)$ and $C(r)$, such that for every prime $p$, for every non-principal character $\chi \bmod p$, for every $M$ and $ N \geq 1$,  one has the inequality
$$
\left\vert  \sum_{M\leq n \leq M+N} \chi (n) \right\vert \leq C(r) N^{1-1/r} p^{(r+1)/4r^2} (\log p)^{c(r)}.
$$
   \end{lemma}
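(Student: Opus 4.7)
The plan is to prove this by the classical Burgess method: multiplicative shifting, an application of Hölder's inequality, and the Weil bound for complete character sums of rational functions. Since the bound is the standard Burgess estimate with an explicit $(\log p)^{c(r)}$ factor, one can either invoke it directly from the literature (e.g.\ \cite{IwKow}) or reprove it along the following lines.

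First I would observe that for any integers $a,b$ with $1\le a\le A$, $1\le b\le B$ and $AB\le N/2$, the shift inequality
$$
\sum_{M<n\le M+N}\chi(n)=\sum_{M<n\le M+N}\chi(n+ab)+O(AB)
$$
is valid. Averaging over the pairs $(a,b)$ with $\gcd(b,p)=1$ and using the multiplicativity $\chi(n+ab)=\chi(b)\chi(b^{-1}n+a)$ (where $b^{-1}$ is the inverse modulo $p$), I would rewrite the sum in terms of the residue $v\equiv b^{-1}n\pmod p$:
$$
\left|\sum_{M<n\le M+N}\chi(n)\right|\ll\frac{1}{AB}\sum_{v\in\mathbb F_p}\nu(v)\left|\sum_{a=1}^{A}\chi(v+a)\right|+AB,
$$
where $\nu(v)$ counts the number of pairs $(n,b)$ with $b^{-1}n\equiv v\pmod p$. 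A direct count yields $\sum_v\nu(v)\ll NB$ and $\sum_v\nu(v)^2\ll NB^2+N^2B^2/p$, the $\log$-factor eventually entering through the removal of $\gcd(b,p)$ and similar technical passes.

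Next I would apply Hölder's inequality to the $v$-sum with exponents chosen so that the moment of the inner character sum is pushed up to order $2r$, yielding a bound of the shape
$$
\Big|\sum_v \nu(v)T(v)\Big|^{2r}\ll \Big(\sum_v\nu(v)\Big)^{2r-2}\Big(\sum_v\nu(v)^2\Big)\Big(\sum_v|T(v)|^{2r}\Big),
$$
with $T(v)=\sum_{a=1}^A\chi(v+a)$. The $2r$-th moment is then expanded and handled by the Weil bound: for each tuple $(a_1,\dots,a_r,a_1',\dots,a_r')$ whose multisets differ, the inner sum $\sum_v\chi\bigl(\prod_i(v+a_i)\overline{\prod_j(v+a_j')}\bigr)$ is $O(rp^{1/2})$, whereas the diagonal tuples contribute at most $r!\,A^r p$. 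Hence
$$
\sum_v|T(v)|^{2r}\ll r!\,A^r p+A^{2r}r\,p^{1/2}.
$$

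Finally I would optimize $A$ and $B$ (roughly $A\approx N^{1/2}p^{-1/(4r)}$ and $B\approx N^{1-1/r}p^{1/(4r)}$ type choices) so that the $AB$ error matches the main term; this produces the claimed bound $N^{1-1/r}p^{(r+1)/(4r^2)}$. The logarithmic factor $(\log p)^{c(r)}$ absorbs the smoothing and completion losses, for instance from passing to $\mathrm{gcd}(b,p)=1$, from separating the ranges where $n\equiv 0\pmod p$ or $\nu(v)$ is exceptionally large, and from the Pólya--Vinogradov completion if the shift $ab$ exceeds $N$. The main obstacle, in my view, is the clean handling of the $\sum_v\nu(v)^2$ term (which relies on a sharp Gauss-sum/divisor estimate, and is the source of the logarithm) and tracking explicit dependence of the constants on $r$, since the Weil diagonal term scales as $r!$ and must be balanced against the off-diagonal $p^{1/2}$ contribution.
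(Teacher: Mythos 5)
The paper does not prove this lemma at all: it is quoted as a known result with a pointer to Iwaniec--Kowalski, Theorem~12.6 and Equation~(12.58), so your first option (invoke it from the literature) is exactly what the paper does, and your sketch is the standard Burgess argument underlying that reference rather than a different route.

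Two details of the sketch would need repair if you actually carried it out. First, the second-moment bound $\sum_v\nu(v)^2\ll NB^2+N^2B^2/p$ is too weak: the off-diagonal part of the congruence $n_1b_2\equiv n_2b_1\pmod p$ contributes $O((NB)^2/p)$, but the diagonal $n_1b_2=n_2b_1$ is counted by the divisor function and gives $O(NB\log(NB))$, not $O(NB^2)$; feeding $NB^2$ into your H\"older inequality loses a factor $B^{1/(2r)}$ and does not recover the exponent $(r+1)/(4r^2)$. Second, the parameter choices should be $A\asymp p^{1/(2r)}$ (balancing $A^rp$ against $A^{2r}p^{1/2}$ in the Weil step) and $B\asymp Np^{-1/(2r)}$, so that $AB\ll N$; your proposed $B\approx N^{1-1/r}p^{1/(4r)}$ makes $AB$ of order $N^{3/2-1/r}$, which can exceed $N$ and invalidates the shifting step, and the residual $O(AB)$ error is in any case handled in the standard proof by induction on $N$ (the shifted sum differs from the original by two sums of length at most $AB$), not by adding $AB$ to the final bound. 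With those corrections the argument is the classical one and yields the stated estimate.
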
  
      
 The value of the constant $c(r)$ has been the subject of improvements (see~\cite{dlBMT,KSY}). However,  the improvement of the exponent $1/4$ in the following statement remains 
 a famous challenging problem.
 
   Choosing $r$ in an optimal way, we obtain from Lemma~\ref{Burgesslog} the following version, which is well suited for applications:
   \begin{lemma}\label{Burgessclassical} For every $\varepsilon >0$, there exists $\delta>0$ (depending on $\varepsilon$), such that for every prime $p$, for every  
   non-principal character $\chi $ modulo $p$,  for every $M$ and  for every $N \geq p^{1/4 + \varepsilon}$  one has the inequality 
   $$
   \sum_{M\leq n \leq M+N} \chi (n) \ll N p^{-\delta}.
   $$
   \end{lemma}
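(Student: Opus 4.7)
The plan is to derive the lemma directly from Lemma~\ref{Burgesslog} by selecting the integer parameter $r$ optimally in terms of $\varepsilon$; there are no new number-theoretic ingredients required, only an algebraic optimization.

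Fix $\varepsilon > 0$. Given a prime $p$, a non-principal character $\chi \pmod p$, an integer $M$, and a length $N \ge p^{1/4+\varepsilon}$, I would apply Lemma~\ref{Burgesslog} with a fixed integer $r = r(\varepsilon) \ge 2$ to be chosen below. Writing its bound as $N$ times an extra saving factor, we obtain
$$\left|\sum_{M \le n \le M+N} \chi(n)\right| \le C(r)\, N \cdot N^{-1/r}\, p^{(r+1)/(4r^2)}\, (\log p)^{c(r)}.$$
Using the hypothesis $N \ge p^{1/4+\varepsilon}$, we estimate $N^{-1/r} \le p^{-(1/4+\varepsilon)/r}$, so the combined exponent of $p$ attached to the excess factor (ignoring the logarithm) becomes
$$-\frac{1/4+\varepsilon}{r} + \frac{r+1}{4r^2} \;=\; -\frac{\varepsilon}{r} + \frac{1}{4r^2}.$$

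It remains to choose $r$ so that this quantity is bounded above by a strictly negative constant depending only on $\varepsilon$. Treating $\varepsilon/r - 1/(4r^2)$ as a function of a real variable $r$ and differentiating shows the optimum lies at $r = 1/(2\varepsilon)$, where the value is $\varepsilon^2$. In practice I would simply take $r = \lceil 1/(2\varepsilon)\rceil$, say, which suffices to give
$$\frac{\varepsilon}{r} - \frac{1}{4r^2} \;\ge\; \frac{\varepsilon^2}{2}$$
(for small $\varepsilon$; larger $\varepsilon$ is trivial since Lemma~\ref{Burgessclassical} is weaker there). With this fixed choice of $r$, both $C(r)$ and $c(r)$ become constants depending only on $\varepsilon$, and the logarithmic factor $(\log p)^{c(r)}$ is absorbed into any $\delta < \varepsilon^2/2$, for instance $\delta = \varepsilon^2/4$, by adjusting the implied constant to cover small primes. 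The resulting bound is $\ll N p^{-\delta}$, uniformly in $M$ and in $N \ge p^{1/4+\varepsilon}$, as required. The only point requiring any care is to ensure the logarithmic loss from $(\log p)^{c(r)}$ is handled uniformly in $p$, but since $r$, and hence $c(r)$, depends only on $\varepsilon$, this is routine.
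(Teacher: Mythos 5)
Your proposal is correct and is essentially the paper's own argument: the paper simply states that Lemma~\ref{Burgessclassical} follows from Lemma~\ref{Burgesslog} by ``choosing $r$ in an optimal way,'' and your computation (saving exponent $\varepsilon/r-1/(4r^2)$, optimized near $r\approx 1/(2\varepsilon)$, with the logarithm absorbed since $c(r)$ depends only on $\varepsilon$) is exactly that optimization carried out explicitly.
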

   The important tool in Burgess proof is the Riemann Hypothesis for curves proved by Weil~\cite{We1,We2}.  It leads to the following important step, that we call the {\it Davenport--Erd\"os Lemma} to refer  to the seminal result in~\cite[Lemma~3 and footnote p.~262]{DaEr}.

 \begin{lemma}
\label{lem: D-E Lem*1} For any prime $p$, for every non-principal multiplicative character $\chi$ modulo $p$, 
for any set $\cD \subseteq \F_p^*$ with  $\sharp\cD = D$, for any complex 
weights $\balpha$ satisfying $\vert \alpha_d \vert \leq 1$ for $d \in \cD$ and for every integer  $\nu \geq 1$,  we have the inequality
$$
  \sum_{\lambda=1}^{p-1}  
   \left|\sum_{d \in \cD}\alpha_d \chi\(\lambda+d\)\right|^{2\nu} \le 
    \(2\nu D\)^\nu p + 2\nu D^{2\nu} p^{1/2}.
  $$
\end{lemma}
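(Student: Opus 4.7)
The plan is to expand the $2\nu$-th power, interchange summations, and apply Weil's bound for character sums of polynomials (the same tool underlying Lemma~\ref{Burgessclassical}). Writing $k:=\ord(\chi)$ so that $\overline{\chi}=\chi^{k-1}$, the expansion takes the form
$$
\sum_{\lambda=1}^{p-1}\left|\sum_{d\in\cD}\alpha_d\chi(\lambda+d)\right|^{2\nu}
= \sum_{(\vec d,\vec e)\in\cD^{\nu}\times\cD^{\nu}}
\alpha_{d_1}\cdots\alpha_{d_\nu}\,\overline{\alpha_{e_1}\cdots\alpha_{e_\nu}}
\sum_{\lambda=1}^{p-1}\chi\bigl(F_{\vec d,\vec e}(\lambda)\bigr),
$$
with $F_{\vec d,\vec e}(X):=\prod_{i=1}^{\nu}(X+d_i)\prod_{j=1}^{\nu}(X+e_j)^{k-1}\in\F_p[X]$, a polynomial of degree at most $\nu k$ with at most $2\nu$ distinct roots.

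The next step is to split the pairs $(\vec d,\vec e)$ according to whether $F_{\vec d,\vec e}$ is a $k$-th power in $\F_p[X]$. When it is not, Weil's theorem yields
$$
\left|\sum_{\lambda}\chi\bigl(F_{\vec d,\vec e}(\lambda)\bigr)\right|\le (2\nu-1)\,p^{1/2}.
$$
Using $|\alpha_d|\le 1$ and summing over all $D^{2\nu}$ tuples produces the contribution $2\nu D^{2\nu}p^{1/2}$, which accounts for the second term of the bound.

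When $F_{\vec d,\vec e}$ is a $k$-th power the inner character sum is trivially bounded by $p$, so the remaining task is to count such \emph{degenerate} pairs. Letting $m_r,n_r$ denote the multiplicities of $r$ in $\vec d$ and $\vec e$ respectively, the $k$-th power condition is $m_r\equiv n_r\pmod k$ for every $r$, which since $k\ge 2$ forces each distinct entry of the combined $2\nu$-tuple $(d_1,\ldots,d_\nu,e_1,\ldots,e_\nu)$ to appear there at least twice. Bounding the number of such configurations by first fixing a set partition of $\{1,\ldots,2\nu\}$ whose blocks have size $\ge 2$ (so at most $\nu$ blocks), and then assigning distinct values from $\cD$ to the blocks, one obtains the count $\le (2\nu)^\nu D^\nu$, hence a contribution at most $(2\nu D)^\nu p$ to the total.

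The principal obstacle is the combinatorial estimate for degenerate pairs, particularly when $k\le\nu$ allows $m_r\ne n_r$ while still $m_r\equiv n_r\pmod k$, so that the degenerate set strictly exceeds the "equal multiset" case. The weaker condition that every distinct value in the combined tuple appears with multiplicity $\ge 2$ nonetheless suffices, and the resulting set-partition count is comfortably absorbed into the factor $(2\nu)^\nu$. Adding the two contributions delivers the claimed inequality.
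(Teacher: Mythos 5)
Your overall architecture --- expand the $2\nu$-th power, apply the Weil bound to the non-degenerate tuples, and count the tuples for which $F_{\vec d,\vec e}$ is a perfect $k$-th power --- is exactly the argument of the proof in~\cite[p.~329]{IwKow} that the paper invokes (the paper gives no proof of its own beyond observing that that argument extends verbatim to general $\cD$ and bounded weights). Your Weil step is fine, and your reduction of the degeneracy condition $m_r\equiv n_r \pmod k$ to ``every value occurring in the combined $2\nu$-tuple occurs at least twice'' is correct since $k\ge 2$.

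The gap is in the final count. You bound the number of degenerate tuples by (number of set partitions of $\{1,\dots,2\nu\}$ into blocks of size $\ge 2$) times $D^{\nu}$ and assert the first factor is ``comfortably absorbed'' into $(2\nu)^{\nu}$. It is not: that partition count is an associated Bell number whose logarithm is $2\nu\log(2\nu)(1+o(1))$, against $\nu\log(2\nu)$ for $(2\nu)^{\nu}$; already the partitions into blocks of size exactly $4$ number $(2\nu)!/\bigl(24^{\nu/2}(\nu/2)!\bigr)$, which exceeds $(2\nu)^{\nu}$ for all large even $\nu$ (numerically the full no-singleton count overtakes $(2\nu)^{\nu}$ already around $\nu=12$). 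This matters because the lemma is applied in the proof of Theorem~\ref{thm:SumTN1} with $\nu=\fl{\log p/\log z}\to\infty$ and the constant $(2\nu)^{\nu}$ is carried explicitly through~\eqref{eq:crude1}. The quantity you must bound is $\sum_{s\le\nu}c_s\,D(D-1)\cdots(D-s+1)$, with $c_s$ the number of no-singleton partitions into $s$ blocks, and it does satisfy $(2\nu D)^{\nu}$ --- but only because the huge partition counts are attached to low powers of $D$, so the two factors cannot be maximized independently as you do. One must argue separately: for $D\le 2\nu$ the trivial bound $D^{2\nu}\le(2\nu D)^{\nu}$ already suffices, while for $D>2\nu$ one needs an honest estimate of $\sum_s c_sD^{s}$ (for instance via $c_s\le\frac{(2\nu)!}{s!\,2^{s}}\cdot\frac{s^{2\nu-2s}}{(2\nu-2s)!}$, whose dominant term is the perfect-matching count $(2\nu-1)!!\,D^{\nu}\le(2\nu D)^{\nu}$). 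For the quadratic character the degeneracy forces every multiplicity to be \emph{even} and the clean matching count applies directly; it is precisely the general-order relaxation to ``at least twice'' --- which you correctly flagged as the delicate point --- that breaks the matching argument and requires the more careful summation. As written, your combinatorial step is false for moderately large $\nu$, so the proof is incomplete exactly where the explicit constant of the lemma is established.
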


One easily checks  that the proof given in~\cite[p.~329]{IwKow}, which concerns the case $\cD =\{1,\dots, D\}$ and $\alpha_d=1$, 
actually   extends to arbitrary sets $\cD \subseteq \F_p^*$  and any bounded weights $\boldsymbol \alpha =(\alpha_d)_{d \in \mathcal  D}$ without any changes. 
\subsection{Counting quadratic non-residues with small size} In~\eqref{4/sqrte} we recalled the size of $\mathfrak z_p$, the least quadratic non-residue modulo $p$.
The following result gives a lower bound for the cardinality of the set of quadratic non-residues which satisfy this inequality

\begin{lemma} \label{599} For every $\varepsilon >0$, there exists $\delta>0$ and $p_0 $ such that, for every $p >p_0$,
one has the inequality
$$
\sharp \left\{ n: 1\leq n \leq p^{1/(4 \sqrt \mathrm {e}) +\varepsilon}, \, (n/p)=-1\right\}
\geq \delta p^{1/(4 \sqrt \mathrm {e}) +\varepsilon}.
$$
\end{lemma}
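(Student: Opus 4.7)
Write $\kappa = 1/(4\sqrt{e}) + \varepsilon$ and $N = p^{\kappa}$; the target bound is $V_-(N) := \#\{n \le N : (n/p) = -1\} \ge \delta N$. The plan is to upgrade the classical Vinogradov smoothness proof of the Burgess--Vinogradov bound $\mathfrak{z}_p \le p^{1/(4\sqrt{e}) + o(1)}$ into a quantitative counting statement by exploiting the slack in its main inequality.

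First I pick $\eta = \eta(\varepsilon) > 0$ small enough that $u := (\tfrac14 + \eta)/\kappa < \sqrt{e}$, so that the Dickman--de~Bruijn function satisfies $\rho(u) = 1 - \log u > \tfrac12$; fix $\tau = \tau(\varepsilon) > 0$ with $\rho(u) \ge \tfrac12 + \tau$. Set $N_0 := p^{1/4 + \eta}$. Since $N_0$ satisfies the hypothesis of Lemma~\ref{Burgessclassical}, that bound produces
\[
V_\pm(N_0) = \frac{N_0}{2} + o(N_0),
\]
where $V_+$ and $V_-$ count residues and non-residues in $[1, N_0]$.

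Next, the Dickman--de~Bruijn theorem gives $\Psi(N_0, N) = (\rho(u) + o(1))N_0 \ge (\tfrac12 + \tfrac{\tau}{2})N_0$ for $p$ large, where $\Psi(x, y)$ denotes the count of $y$-smooth integers up to $x$. Comparing with the residue count above forces at least $(\tfrac{\tau}{2} - o(1))N_0$ of these $N$-smooth integers to be non-residues. Each such non-residue has at least one non-residue prime factor $q \le N$, so a union bound yields
\[
\sum_{\substack{q \text{ prime}, \, q \le N \\ (q/p) = -1}} \frac{1}{q} \ge \frac{\tau}{2} + o(1).
\]

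To conclude, I convert this harmonic-density estimate on non-residue primes into a natural-density estimate on non-residues. Each non-residue $n \le N$ is stratified by its smallest non-residue prime factor $q_0 = q_0(n)$; writing $n = q_0 m$ uniquely with $(m/p) = 1$ and every prime factor of $m$ smaller than $q_0$ being a residue, the stratum $A_{q_0} := \{n \le N : q_0(n) = q_0\}$ is lower-bounded by a Brun-type sieve argument, with the key input being the elementary fact that every integer below $\mathfrak{z}_p$ is automatically a residue (making the sieve trivial in the $q_0 = \mathfrak{z}_p$ stratum). Summing the resulting stratum contributions over non-residue primes $q_0 \le N$ and invoking the harmonic estimate above produces $V_-(N) \gg \tau N$, which gives the claim with $\delta = c\tau$ for some $c > 0$.

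\textbf{Main obstacle.} The delicate step is the last one: translating the harmonic bound $\sum_{q \le N, (q/p) = -1} 1/q \gg \tau$ into the natural-density bound $V_-(N) \gg N$. The stratification by smallest non-residue prime factor resolves the overcounting problem, and the Brun sieve provides the stratum lower bounds, but a case analysis may be needed depending on whether $\mathfrak{z}_p$ is of order $\asymp N$ (in which case the single stratum $q_0 = \mathfrak{z}_p$ already carries the mass) or much smaller (in which case one must combine many strata carefully, ensuring that a positive proportion of residues remains available at each scale $N/q_0$).
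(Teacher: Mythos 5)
First, note that the paper does not prove this lemma at all: it simply cites~\cite[Theorem~2.1]{BGH-BS} (with~\cite{GS} for explicit constants), so you are attempting to reprove a quoted result. Your steps up to the harmonic bound are correct and standard: Burgess at $N_0=p^{1/4+\eta}$, Dickman's $\rho(u)>1/2$ for $u<\sqrt{e}$, and the union bound together give $\sum_{q\le N,\,(q/p)=-1}1/q\ge \tau/2+o(1)$. This is exactly Vinogradov's trick and is fine.

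The gap is entirely in your final step, and it is not a technicality: converting the harmonic bound on non-residue \emph{primes} into a positive natural density of non-residue \emph{integers} is the actual content of the cited theorem, and your proposed mechanism does not deliver it. Concretely: (i) your claim that when $\mathfrak z_p$ is large "the single stratum $q_0=\mathfrak z_p$ already carries the mass" is false, since that stratum has at most $N/\mathfrak z_p$ elements, and $\mathfrak z_p$ can be as large as $p^{1/(4\sqrt{e})+o(1)}$, making $N/\mathfrak z_p\le p^{2\varepsilon}$, a vanishing proportion of $N$. (ii) In the stratum attached to a non-residue prime $q_0$ you must count cofactors $m\le N/q_0$ with $(m/p)=+1$ (and no non-residue prime factor below $q_0$); a Brun-type sieve can enforce the coprimality conditions but is structurally unable to enforce the sign condition $(m/p)=+1$, and bounding the excluded set by $V_-(N/q_0)$ is circular. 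The only elementary surrogate — demanding that \emph{all} prime factors of $m$ be residues, which forces $(m/p)=+1$ — yields, with $\sigma:=\sum_{q\le N,\,(q/p)=-1}1/q$, the bound $V_-(N)\ge \sigma(1-\sigma)N+o(N)$, which is vacuous when $\sigma\ge 1$; and $\sigma\ge 1$ cannot be excluded (e.g.\ nothing elementary prevents all primes in $(C,N]$ from being non-residues, giving $\sigma\asymp\log\log N$). The two extreme regimes do work as you envisage ($\mathfrak z_p=O(1)$ gives $V_-(N)\ge N/(2\mathfrak z_p)-O(1)$; $\mathfrak z_p>N^{1/2}$ forces every cofactor $m\le N/q_0<\mathfrak z_p$ to be a residue, whence $V_-(N)=(\sigma+o(1))N$), but the intermediate range, say $\mathfrak z_p\asymp\log p$, is exactly where your sketch has no mechanism. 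What is needed there is the statement that a completely multiplicative $\pm1$-valued function with $\sum_{f(q)=-1,\,q\le N}1/q\ge\tau$ has mean value at most $1-c(\tau)$ on $[1,N]$ — a Hall/Hildebrand/Hal\'asz-type theorem on mean values of multiplicative functions, which is precisely the non-trivial input behind~\cite{BGH-BS} and~\cite{GS} and which your proposal would need to import or reprove.
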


This is the content of~\cite[Theorem~2.1]{BGH-BS}. For an explicit value of $\delta$ see~\cite[Corollary~1.8]{GS}. 

\subsection{Bounds on some arithmetic  functions} For  an integer $s\geq 1$, let $\tau_s(m)$ denotes the $s$-fold divisor function, that is, 
 the number of ordered factorisations $m = k_1\ldots k_s$ with non-negative 
 integers $ k_1,\ldots, k_s$. 
 
 In particular $\tau_2(m) = \tau (m)$, the usual divisor function. On average the function $\tau_s (m)$ is bounded by a power of $\log m$ since we have
 the following esimate (see~\cite[Equation~(1.80)]{IwKow}, for instance)

 \begin{lemma}
\label{lem:tau31} Let $s\geq 1$ be a fixed integer. Then for every $M\geq 1$, one has 
the inequality
$$
  \sum_{1\leq m \le M} \tau_s(m)^2\ll M(\log 2M)^{s^2-1}.
  $$
  
\end{lemma}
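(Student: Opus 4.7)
The plan is to treat $n \mapsto \tau_s(n)^2$ as a non-negative multiplicative function and analyse its Dirichlet series via Euler-product factorisation. Since $\tau_s(p^k) = \binom{k+s-1}{s-1}$, and in particular $\tau_s(p) = s$, the series
$$
F(\sigma) := \sum_{n=1}^\infty \frac{\tau_s(n)^2}{n^\sigma} = \prod_p \left(1 + \frac{s^2}{p^\sigma} + \sum_{k \ge 2} \binom{k+s-1}{s-1}^2 p^{-k\sigma}\right)
$$
converges absolutely for $\operatorname{Re}\sigma > 1$. Comparing each local factor with the expansion $(1-p^{-\sigma})^{-s^2} = 1 + s^2 p^{-\sigma} + O(p^{-2\sigma})$ yields the decomposition $F(\sigma) = \zeta(\sigma)^{s^2} H(\sigma)$, where $H$ is an Euler product that converges absolutely in some half-plane $\operatorname{Re}\sigma > 1/2$, as can be seen from the crude bound $\binom{k+s-1}{s-1}^2 \ll_s (k+1)^{2s-2}$ on the tail coefficients.

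Next, I would apply the truncated Perron formula to $F$, shifting the line of integration slightly past $\operatorname{Re}\sigma = 1$. The pole of order exactly $s^2$ of $\zeta^{s^2}$ at $\sigma = 1$, together with the regularity of $H$ nearby, contributes a main term of the form $M P(\log M)$ with $P$ a polynomial of degree $s^2 - 1$, while the shifted contour is absorbed into the error. This gives the asymptotic
$$
\sum_{n \le M} \tau_s(n)^2 = M P(\log M) + O\bigl(M(\log 2M)^{s^2 - 2}\bigr),
$$
which in particular implies the stated upper bound for $M \ge 2$; the case $M = 1$ is trivial.

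A completely elementary alternative, bypassing all analytic machinery, is to invoke Shiu's theorem: for a non-negative multiplicative function $f$ satisfying $f(p) \le A$ at every prime and $f(p^k) \le B^k$ at every prime power, one has $\sum_{n \le M} f(n) \ll M(\log 2M)^{A-1}$. Applied with $f = \tau_s^2$, $A = s^2$ and any $B$ dominating the (polynomial in $k$) growth of $\tau_s(p^k)^2$, this delivers the claim in one step. In either route, the argument is essentially mechanical; the only point requiring verification is the polynomial bound $\tau_s(p^k) \ll_s (k+1)^{s-1}$, which is immediate from the binomial formula, so no substantive obstacle arises beyond this routine check.
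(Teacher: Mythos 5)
Your proposal is correct in substance, but note that the paper does not prove this lemma at all: it simply cites~\cite[Equation~(1.80)]{IwKow}, so any actual proof is ``a different route''. Your first argument (Euler-product factorisation $F(\sigma)=\zeta(\sigma)^{s^2}H(\sigma)$ with $H$ absolutely convergent for $\operatorname{Re}\sigma>1/2$, then Perron) is sound and in fact delivers an asymptotic formula, which is more than the lemma requires; since $s^2$ is an integer, $\zeta^{s^2}$ is genuinely meromorphic at $\sigma=1$ and no Selberg--Delange machinery is needed, and zeros of $\zeta$ cause no trouble because $\zeta^{s^2}$ appears in the numerator. One caveat on your second route: Shiu's theorem as you paraphrase it is false without the additional standard hypothesis $f(n)\ll_{\varepsilon}n^{\varepsilon}$ (take $f(p^k)=B^k$ with $B>2$; then $n=2^{\lfloor \log M/\log 2\rfloor}$ alone contributes $\gg M^{1+\delta}$). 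For $f=\tau_s^2$ this hypothesis does hold, precisely because of the polynomial bound $\tau_s(p^k)\le (k+1)^{s-1}$ that you verify, so the application is legitimate, but the general statement should be corrected. A still more elementary route, closer in spirit to what a textbook citation hides, is the pointwise inequality $\tau_s(m)^2\le\tau_{s^2}(m)$ (checked at prime powers via a transportation-matrix injection) followed by the standard induction bound $\sum_{m\le M}\tau_k(m)\ll_k M(\log 2M)^{k-1}$; this avoids both complex analysis and Shiu's theorem.
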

 Finally, the following estimate is an easy consequence of the Prime Number Theorem
 (or in fact of Chebychev's inequality).

 \begin{lemma}\label{Cheby}
 Uniformly for $x\geq 1$, one has the inequality
 $$
 \sum_{p\geq x} \frac 1{p^2} \ll \frac {1}{x \log 2x}.
 $$
 \end{lemma}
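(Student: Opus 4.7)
\smallskip

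The plan is to reduce the sum over primes to an integral involving the prime counting function $\pi(t)$, and then bound it using Chebyshev's estimate $\pi(t) \ll t/\log 2t$ (valid for all $t \geq 1$, with a suitable constant). This is very standard, and the work is entirely on the analytic estimate side.

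First I would split into two cases. If $x$ lies in a bounded range, say $1 \leq x \leq 3$, then the sum $\sum_{p \geq x} 1/p^2$ is bounded above by the absolutely convergent series $\sum_{p} 1/p^2$, which is a constant, while the right-hand side $1/(x\log 2x)$ is bounded below by a positive constant on that range; so the inequality is trivial. The main case is $x \geq 3$, where we write the sum as a Stieltjes integral
\[
\sum_{p \geq x} \frac{1}{p^2} = \int_{x^-}^{\infty} \frac{d\pi(t)}{t^2}
\]
and apply integration by parts (Abel summation). Since $\pi(T)/T^2 \to 0$ as $T \to \infty$, this gives
\[
\sum_{p \geq x} \frac{1}{p^2} = -\frac{\pi(x^-)}{x^2} + 2\int_x^\infty \frac{\pi(t)}{t^3}\, dt
\;\leq\; 2 \int_x^\infty \frac{\pi(t)}{t^3}\, dt.
\]

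Next I would insert the Chebyshev bound $\pi(t) \ll t/\log t$ into the integrand, and use that $1/\log t$ is decreasing for $t \geq x \geq 3$ to pull the factor $1/\log x$ outside:
\[
\int_x^\infty \frac{\pi(t)}{t^3}\, dt \;\ll\; \int_x^\infty \frac{dt}{t^2 \log t} \;\leq\; \frac{1}{\log x}\int_x^\infty \frac{dt}{t^2} \;=\; \frac{1}{x\log x}.
\]
Since $\log x$ and $\log 2x$ are comparable in this range, this yields the desired bound $\ll 1/(x \log 2x)$.

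I do not anticipate a genuine obstacle: the only mild subtlety is ensuring the estimate is uniform down to $x = 1$, which is why I separate off the trivial range and use $\log 2x$ (rather than $\log x$) in the denominator of the stated inequality. One could alternatively deduce the bound directly from Mertens' theorem by partial summation from $\sum_{p \leq t} 1/p = \log\log t + O(1)$, but the Chebyshev-based argument above is shorter and entirely self-contained.
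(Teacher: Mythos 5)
Your proof is correct, and it is precisely the Chebyshev-plus-partial-summation argument the paper has in mind: the paper states this lemma without proof, remarking only that it is "an easy consequence of the Prime Number Theorem (or in fact of Chebychev's inequality)." Your handling of the small range $1\le x\le 3$ and the comparability of $\log x$ and $\log 2x$ correctly supplies the uniformity the statement requires.
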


\subsection{Sifted integers} Given two real numbers $y\ge x> 2$ and an integer $N \ge 1$,  we denote
by $\cA_0(N;x,y)$ the set  of positive integers $n \le N$ that do not have
a prime divisor in the  half-open interval $(x,y]$. We need the following upper bound
on the cardinality $\sharp\cA_0(N;x,y)$:

\begin{lemma}
\label{lem:ANxy1} Uniformly over  integers $N$ and real $x$ and $y$ with  $N \ge y \ge x \ge 2$,  we have
$$
\sharp\cA_0(N;x,y) \ll N \frac{\log x}{\log y}.
$$  
\end{lemma}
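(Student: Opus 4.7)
The plan is to exploit the canonical multiplicative decomposition.  Every integer $n \in \cA_0(N;x,y)$ factors uniquely as $n = mk$, where $m$ has all prime factors at most $x$ (so $m$ is $x$-smooth) and $k$ has all prime factors strictly greater than $y$ (so $k$ is $y$-rough).  Writing $\Phi(M,y)$ for the number of $y$-rough positive integers $\le M$, this yields the identity
\begin{equation*}
\sharp\cA_0(N;x,y) \;=\; \sum_{\substack{1\le m\le N\\ m \text{ is } x\text{-smooth}}} \Phi(N/m,\,y),
\end{equation*}
and I would split this outer sum at the threshold $m = N/y$ to estimate the two pieces separately.

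For $m \le N/y$ one has $N/m \ge y$, and the fundamental lemma of Brun's sieve together with Mertens' theorem yields the standard uniform bound $\Phi(M,y) \ll M/\log y$ valid for all $M \ge y \ge 2$.  Combined with Mertens' product formula
\begin{equation*}
\sum_{\substack{m\ge 1\\ m \text{ is } x\text{-smooth}}} \frac{1}{m} \;=\; \prod_{p\le x}\!\left(1-\frac 1p\right)^{\!-1} \;\ll\; \log x,
\end{equation*}
this range contributes at most
\begin{equation*}
\frac{N}{\log y}\sum_{\substack{1\le m\le N/y\\ m \text{ is } x\text{-smooth}}}\!\frac 1m \;\ll\; \frac{N\log x}{\log y},
\end{equation*}
which is exactly the desired bound.

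For $N/y < m \le N$ one has $N/m < y$, so the only $y$-rough integer in $[1,N/m]$ is $1$ itself; hence $\Phi(N/m,y) = 1$, and the contribution is at most the total count $\Psi(N,x)$ of $x$-smooth integers $\le N$.  Here I would invoke the classical bound $\Psi(N,x) \ll N\log x/\log N$, valid uniformly in $2 \le x \le N$ (itself a direct consequence of Brun's sieve applied to sift out the primes in $(x,N]$, together with Mertens' product).  Since $\log N \ge \log y$ by hypothesis, this contribution is also $\ll N\log x/\log y$, completing the argument.

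The main technical obstacle is bookkeeping rather than depth: one must check that the two sieve-type bounds $\Phi(M,y)\ll M/\log y$ and $\Psi(N,x)\ll N\log x/\log N$ really are uniform across the entire parameter range, with particular attention near the boundaries $M\approx y$ and $x\approx N$.  No ingredient beyond Mertens' theorem and the fundamental lemma of Brun's sieve is required.
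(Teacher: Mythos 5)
Your proof is correct and follows essentially the same route as the paper: the unique factorisation of each $n\in\cA_0(N;x,y)$ into an $x$-smooth part times a $y$-rough part, a sieve upper bound of the form $\Phi(M,y)\ll M/\log y$, and Mertens' product to sum the reciprocals of the smooth parts. Your explicit splitting at $m=N/y$ and separate treatment of the smooth-number tail via $\Psi(N,x)\ll N\log x/\log N$ is in fact slightly more careful than the paper's version, which applies the rough-number bound across the whole range.
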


\begin{proof} A classical method from the sieve theory  leads to the upper bound 
$$
\sharp  \left\{ 1 < m \leq M  :~p\mid m \Rightarrow p \geq y\right\} = O \left( \frac M {\log 2y} \right), 
$$
uniformly for $M$ and $y \geq 1$. We write every $n\in \mathcal A _0(N;x,y)$  under the unique form
$$
n= dm,
$$
where $d$ has all its prime factors less than $x$
 and $m$ has all its prime factors greater than $y$. 
 With Lemma~\ref{lem:ANxy1}, we obtain
 the inequality
 \begin{align*}
 \sharp\cA_0(N;x,y)  &  \ll  1+ \sum_{\substack{d \\ p\mid d \Rightarrow p \leq x}} \frac{N/d} {\log 2y}\ll 1 + \frac N{\log 2y}  \prod_{p \leq x} \left( 1 +\frac 1p \right) \\
 & \ll N \frac{\log 2x}{\log 2y},
 \end{align*}
 by the Mertens formula (see, for example,~\cite[Equation~ 2.15]{IwKow}).  
 \end{proof}
 
 \subsection{The determinant equation} Due to the structure of the problem, we need some information on the so-called {\it determinant equation}
 $$
 \Delta = ad-bc,
 $$
 where $\Delta $ is a given integer, and where the unknowns $a$, $b$, $c$ and $d$ lie in some interval. The following result is an improvement of~\cite[Theorem~1]{DFI},
 which is based on bounds of sums of Kloosterman fractions. However, we remark that the 
 original result from~ \cite{DFI} is also sufficient for our purposes.

 We now recall  the following asymptotic formula due to Bettin and Chandee~\cite[Corollary~1]{BeCh}.

 \begin{lemma}\label{BettinChandee} Let $\Delta \ne 0$ be an integer and let
$$
 \mathcal T_\Delta (M_1,M_2,N_1, N_2):=
 \underset{\substack{m_1 \in \mathcal M_1,\, m_2 \in \mathcal M_2,\, n_1 \in \mathcal N_1,\, n_2 \in \mathcal N_2\\\ m_1n_2-m_2n_1 =\Delta}}{\sum\  \sum \ \sum \ \sum} f(m_1) g(m_2) \alpha_{n_1} \beta_{n_2},
$$
 where the functions $f(m_1)$, $g(m_2)$ and the weights $\balpha=\{\alpha_{n_1}\}$ and 
 $\bbeta = \{\beta_{n_2}\}$ are supported on 
 $ \mathcal M_1 := [M_1/2, M_1]$, $\mathcal M_2 := [M_2/2, M_2]$, $\mathcal N_1:= [N_1/2, N_1]$ and $\mathcal N_2 := [N_2/2, N_2]$, respectively.  Moreover assume
 that the functions $f$ and $g$ are of $\mathcal C^\infty$--class and satisfy the inequalities 
 $$
 f^{(j)}(t) \ll \eta^j M_1^{-j}, \quad t \in  \mathcal M_1, \mand g^{(j)}(t)  \ll \eta^{j} M_2^{-j},
 \quad t \in  \mathcal M_2,
 $$ 
  for all fixed $j\geq 0$  and some $\eta >1$.
 Then, for any $\varepsilon >0$,  we have
 \begin{equation} 
 \begin{split} 
 \label{formulaforTDelta}
 \mathcal T_\Delta (M_1,M_2,N_1,N_2) 
 & = \underset{\substack{n_1 \in \mathcal N_1,\, n_2 \in \mathcal N_2 \\ \gcd(n_1,n_2) \mid \Delta} }{ \sum \ \sum} 
 \frac{\gcd(n_1,n_2)}{n_1n_2} \alpha_{n_1} \beta_{n_2}\,
 \int_{\mathbb R} f \left( \frac{x+\Delta}{n_2} \right) g \left( \frac { x}{n_1} \right)\, dx\\
 & \qquad   +O \left( (\eta R)^{\frac{3}{2}}\Vert \balpha \Vert_2 \, \Vert \bbeta\Vert_2
  (N_1N_2)^{\frac{7}{20} }(N_1+N_2)^{\frac 14 +\varepsilon} (M_1M_2)^\varepsilon \right),
 \end{split}
 \end{equation} 
 where $\Vert \balpha \Vert_2$ and $\Vert \bbeta\Vert_2$ are $L_2$-norms of the weights
  $\balpha$ and $\bbeta$, respectively, and
 \begin{equation}\label{defR}
 R:= \frac{M_1N_2}{M_2N_1} + \frac{M_2N_1}{M_1N_2}.
 \end{equation}  
 \end{lemma}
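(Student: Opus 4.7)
The plan is to fix $(n_1,n_2)$, parameterize the solutions of the determinant equation $m_1 n_2 - m_2 n_1 = \Delta$ as an arithmetic progression in $m_1$, apply Poisson summation on that progression, isolate the zero frequency as the main term, and reduce the error to a bilinear sum of Kloosterman fractions bounded by Bettin--Chandee.

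For each pair $(n_1,n_2) \in \cN_1 \times \cN_2$ the inner sum is nonempty only when $d := \gcd(n_1,n_2)$ divides $\Delta$; writing $n_1 = dn_1'$, $n_2 = dn_2'$, $\Delta = d\Delta'$ with $\gcd(n_1',n_2') = 1$, the equation becomes $m_2 = (m_1 n_2 - \Delta)/n_1$ together with the congruence $m_1 \equiv r \pmod{n_1'}$, where $r \equiv \Delta'\,\overline{n_2'} \pmod{n_1'}$. Poisson summation on this progression gives
$$ \sum_{m_1 \equiv r\,(n_1')} f(m_1)\, g\!\left(\frac{m_1 n_2 - \Delta}{n_1}\right) \;=\; \frac{1}{n_1'} \sum_{k \in \Z} \e\!\left(\frac{kr}{n_1'}\right)\, \widehat{H}_{n_1,n_2}\!\left(\frac{k}{n_1'}\right), $$
with $H_{n_1,n_2}(x) := f(x)\, g((xn_2-\Delta)/n_1)$. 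The smoothness hypotheses on $f$ and $g$ propagate to $H_{n_1,n_2}$ with derivative size controlled by $\eta$ and the ratios of the scales $M_i, N_i$, so repeated integration by parts confines the effective range of $k$ to $|k| \le K$, with $K$ polynomial in $\eta$ and the parameter $R$ of~\eqref{defR}.

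The $k=0$ contribution is free of arithmetic in $(n_1,n_2)$: a direct change of variable shows
$$ \frac{1}{n_1'}\widehat{H}_{n_1,n_2}(0) \;=\; \frac{\gcd(n_1,n_2)}{n_1 n_2} \int f\!\left(\frac{x+\Delta}{n_2}\right) g\!\left(\frac{x}{n_1}\right)\, dx, $$
which, weighted by $\alpha_{n_1}\beta_{n_2}$ and summed over $(n_1,n_2)$ with $\gcd(n_1,n_2) \mid \Delta$, is exactly the main term of~\eqref{formulaforTDelta}. The $k \ne 0$ contributions carry the phases $\e(k\Delta'\,\overline{n_2'}/n_1')$, i.e., genuine Kloosterman fractions; reorganizing the sum over $(n_1,n_2)$ then produces, for each admissible $k$, a bilinear form of the shape
$$ \sum_{n_1 \in \cN_1}\sum_{n_2 \in \cN_2} \alpha_{n_1}\beta_{n_2}\, \Phi_k(n_1,n_2)\, \e\!\left(\frac{k\Delta'\,\overline{n_2'}}{n_1'}\right), $$
where $\Phi_k$ is a smooth factor of controlled size absorbing the Fourier data $\widehat{H}_{n_1,n_2}(k/n_1')/n_1'$. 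These bilinear Kloosterman-fraction sums are precisely the objects estimated by Bettin and Chandee~\cite{BeCh}; their bound, combined with Cauchy--Schwarz in the weights $\balpha, \bbeta$ and a dyadic summation over $|k| \le K$, yields the error exponents $(N_1 N_2)^{7/20}$ and $(N_1+N_2)^{1/4+\varepsilon}$ that appear in~\eqref{formulaforTDelta}.

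The decisive obstacle is precisely the Bettin--Chandee bound itself, which rests on Kuznetsov's formula for $\mathrm{GL}_2$ and spectral estimates for Fourier coefficients of automorphic forms, together with a delicate analysis of the associated Bessel transforms parameterized by~$R$. Everything else --- the parameterization of determinant solutions, the Poisson expansion, the identification of the main term, and the dyadic treatment of the frequency parameter~$k$ --- is essentially routine book-keeping once this spectral input is granted.
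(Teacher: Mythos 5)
The paper does not prove this lemma at all: it is stated verbatim as a quotation of Bettin and Chandee~\cite[Corollary~1]{BeCh}, so there is no internal argument to compare yours against. That said, your sketch is a faithful reconstruction of how Bettin and Chandee themselves deduce their Corollary~1 from their Theorem~1 on trilinear forms with Kloosterman fractions: parameterizing the solutions of $m_1n_2-m_2n_1=\Delta$ by the congruence $m_1\equiv \Delta'\overline{n_2'}\pmod{n_1'}$ after extracting $d=\gcd(n_1,n_2)$, applying Poisson summation on that progression, and checking (correctly, as your change of variable $u=xn_2-\Delta$ shows, since $1/(n_1'n_2)=d/(n_1n_2)$) that the zero frequency reproduces exactly the main term of~\eqref{formulaforTDelta}, while the nonzero frequencies produce genuine Kloosterman fractions $\e(k\Delta'\overline{n_2'}/n_1')$ to be fed into the trilinear bound. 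You also correctly locate the entire difficulty in that trilinear bound, which you invoke rather than prove --- precisely the division of labour the paper adopts by citing~\cite{BeCh}. The only places where your sketch is thinner than the source are bookkeeping ones: the truncation length $K$ of the dual sum depends on $n_1'$ times the derivative scale of $H_{n_1,n_2}$ (hence on $N_i/M_i$ as well as on $\eta$ and $R$), and the summation over the frequency $k$ is not handled by a separate dyadic argument but is absorbed as the third (``numerator'') variable of the trilinear form, which is what produces the clean $(\eta R)^{3/2}$ factor. None of this affects the validity of the outline.
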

 In our application, we  have $M_1$, $M_2$, $N_1$ and $N_2$  are approximately of the same size $X$, say,  so $R $ is of moderate size. Hence, if the size of  $\Delta$ 
 is about $M_1N_2$, and if  $\alpha_{n_1}$ and $\beta_{n_2} $ are the characteristic functions of the intervals $\mathcal N_1$ and $\mathcal N_2$, the main term of the above formula is approximately $X^2$ and the error term has size $O ( \eta^{\frac 32 } X^{\frac{39}{20} + \varepsilon})$.
 
 In the case where $\beta_{n_2} =1$ we can obtain an upper bound of quality 
 comparable with~\eqref{formulaforTDelta} but with simpler tools. Indeed, the deteminant equation $m_1n_2-m_2n_1= \Delta$
 is transformed into the congruence $m_1\equiv \Delta \overline{n_2} \bmod n_1$. It suffices to apply  classical Fourier techniques and bounds for short Kloosterman sums to obtain the desired result.
 \subsection{A smooth partition of unity} 
 The use of Lemma~\ref{BettinChandee} requires smooth functions. 
 To fulfil this condition, we appeal to the following smooth partitioning of unity (see, for instance~\cite[Lemme~2]{Fo}).

 \begin{lemma}\label{smoothpartition} Let $\Xi>1$ be a real number. There exists a sequence $b_{\ell, \Xi}$ ($\ell \geq 0$) of functions belonging to $\mathcal C ^\infty([\Xi^{\ell -1}, \Xi^{\ell +1}],
 \mathbb R^+)$, with the two properties
 \begin{itemize}
 \item for all $\xi \geq 1$, one has
 $$
 \sum_{\ell =  0}^\infty b_{\ell, \Xi} (\xi) =1,
 $$
 \item  for all $\xi \geq 1$, for all fixed $\nu\geq 0$, the derivatives satisfy 
 $$ b_{\ell, \Xi}^{(\nu)} (\xi ) \ll  \xi^{-\nu} \Xi^{\nu} (\Xi-1)^{-\nu}
 $$ 
 \end{itemize}
 \end{lemma}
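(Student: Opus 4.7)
The strategy is to exploit the geometric progression structure of the supports $[\Xi^{\ell-1},\Xi^{\ell+1}]$ by passing to logarithmic coordinates. Under the change of variables $\xi\mapsto t:=(\log\xi)/\log\Xi$, these supports become the translates $[\ell-1,\ell+1]$ on $\R$, so the problem reduces to constructing a single smooth partition of unity on $\R$ subordinate to the cover by these overlapping unit intervals. I would therefore fix once and for all a model function $\psi\in\mathcal{C}^\infty(\R)$, non-negative, with support $[-1,1]$, and satisfying the translation-invariant identity $\sum_{k\in\Z}\psi(t-k)=1$ for every $t\in\R$. Such $\psi$ is classical: e.g., take a smooth non-decreasing $\rho:\R\to[0,1]$ with $\rho(t)=0$ for $t\le-1$ and $\rho(t)=1$ for $t\ge 0$, and set $\psi(t):=\rho(t+1)-\rho(t)$. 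In particular $\psi$ and all its derivatives vanish at $\pm 1$ by compactness of the support.

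Next I would define $b_{\ell,\Xi}(\xi):=\psi\bigl((\log\xi)/\log\Xi-\ell\bigr)$ for $\ell\ge 0$. The support condition is immediate: $b_{\ell,\Xi}(\xi)$ is supported exactly where $(\log\xi)/\log\Xi-\ell\in[-1,1]$, that is, $\xi\in[\Xi^{\ell-1},\Xi^{\ell+1}]$. The partition identity on $\R$ pulls back to $\sum_{\ell\in\Z}b_{\ell,\Xi}(\xi)=1$ for every $\xi>0$. For $\xi\ge 1$ we have $(\log\xi)/\log\Xi\ge 0$, so any term with $\ell\le-1$ gives argument $\ge 1$, where $\psi$ vanishes; hence $\sum_{\ell=0}^\infty b_{\ell,\Xi}(\xi)=1$, as required.

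For the derivative estimates, write $f(\xi)=(\log\xi)/\log\Xi-\ell$, so that $f^{(j)}(\xi)=(-1)^{j-1}(j-1)!\,\xi^{-j}(\log\Xi)^{-1}$ for $j\ge 1$. Applying Fa\`a di Bruno's formula to $b_{\ell,\Xi}=\psi\circ f$, each term is a product $\psi^{(m)}(f(\xi))\cdot\prod_{i=1}^{m} f^{(j_i)}(\xi)$ with $j_1+\cdots+j_m=\nu$, and is bounded by $O_\nu\bigl(\xi^{-\nu}(\log\Xi)^{-m}\bigr)$. Summing over $m=1,\ldots,\nu$ yields $|b_{\ell,\Xi}^{(\nu)}(\xi)|\ll_\nu\xi^{-\nu}\max_{1\le m\le\nu}(\log\Xi)^{-m}$. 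The elementary inequality $\log\Xi\ge(\Xi-1)/\Xi$ (valid for $\Xi\ge 1$) then gives $(\log\Xi)^{-m}\le\bigl(\Xi/(\Xi-1)\bigr)^{m}\le\bigl(\Xi/(\Xi-1)\bigr)^{\nu}$ since $\Xi/(\Xi-1)\ge 1$, which produces exactly the claimed bound $b_{\ell,\Xi}^{(\nu)}(\xi)\ll\xi^{-\nu}\Xi^{\nu}(\Xi-1)^{-\nu}$.

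No step here is genuinely difficult; the conceptual point is simply to recognise that the geometric supports should be linearised via logarithms. The only delicate regime is $\Xi\to 1^{+}$, where the partition becomes very fine and the bumps must have large derivatives; this is precisely what the factor $(\Xi-1)^{-\nu}$ measures, and it emerges naturally from the $(\log\Xi)^{-\nu}$ produced by the logarithmic coordinate change together with the sharp lower bound on $\log\Xi$ near $\Xi=1$.
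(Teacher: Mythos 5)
Your construction is the standard one, and the paper itself gives no proof of this lemma at all (it is quoted from~\cite[Lemme~2]{Fo}); the logarithmic change of variables, the telescoping partition $\psi(t)=\rho(t+1)-\rho(t)$, and the Fa\`a di Bruno computation combined with $\log\Xi\ge(\Xi-1)/\Xi$ are exactly what is needed, and the derivative bound you obtain is uniform in $\ell$ and $\Xi$ as required.

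One small slip in the explicit example: with your normalisation of $\rho$ (namely $\rho(t)=0$ for $t\le-1$ and $\rho(t)=1$ for $t\ge0$), the function $\psi(t)=\rho(t+1)-\rho(t)$ is supported on $[-2,0]$, not on $[-1,1]$: indeed $\psi(t)=1-1=0$ already for all $t\ge0$, while $\psi$ can be nonzero down to $t=-2$. Consequently $b_{\ell,\Xi}$ would live on $[\Xi^{\ell-2},\Xi^{\ell}]$ and, e.g., $b_{\ell,\Xi}(\Xi^{\ell})=\psi(0)=0$ rather than $1$, contradicting the observation made right after the lemma in the paper. The fix is immediate: take $\rho$ with transition region $[0,1]$ (so $\rho=0$ for $t\le0$ and $\rho=1$ for $t\ge1$); then $\psi=\rho(\cdot+1)-\rho(\cdot)$ is supported on $[-1,1]$, is non-negative, satisfies $\psi(0)=1$ and $\sum_{k\in\Z}\psi(t-k)=1$ by telescoping, and the rest of your argument goes through verbatim. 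With that correction the proof is complete.
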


 Notice that the above conditions imply the equality   $b_{\ell, \Xi} (\Xi^\ell) =1$ and the inequality $0\leq b_{\ell, \Xi} (t) \leq 1$ for all $t \geq 1$.  
 
 \section{Proof of Theorem~\ref{413}}\label{proofTheorem13} Assume that $N$   satisfies the inequalities $p^{1/(8 \sqrt {\mathrm e}) +\varepsilon} < N < \sqrt p$ (as otherwise the result follows easily 
 from classical tools).

 We start from the inequality
 \begin{equation} 
 \begin{split} 
 \sharp \, \left({\mathbf M} (N,p) \, \bigcap \,   \mathcal M_2^{\ne \square} (p)\right) &\geq \underset{\substack{1\leq a,\, b,\, c,\, d\leq N\\ ((ad-bc)/p)= -1}} {\sum\, \sum\, \sum\, \sum}\,  1  \\
 & \geq  \sum_{\substack{ 1-N^2 \leq \Delta  \leq N^2-1
 \\ (\Delta/p) =-1}} \left( \underset{\substack{ 1\leq m_1,\, m_2, \, n_1, \, n_2\leq N\\ m_1n_2-m_2 n_1 =\Delta}} {\sum\, \sum\, \sum\, \sum}\,  1  \right) \\
 &\geq  \sum_{\substack{1< \Delta  \leq N^2/1000
 \\ (\Delta/p) =-1}}   \mathcal T_\Delta(N,N,N,N),
  \label{721}
 \end{split}
 \end{equation} 
 where $ \mathcal T_\Delta (M_1,M_2,N_1,N_2)$ is defined in Lemma~\ref{BettinChandee}, which we use
  with the following parameters:   
 \begin{itemize}
 \item
 $M_1=M_2=N_1=N_2 =N$, 
 \item  $\alpha_{n_1}$ is the characteristic function of $\mathcal N_1$ and $\beta_{n_2}$ is the characteristic function of $\mathcal N_2$, 
 \item $f (\xi) =g(\xi) = \psi\left( \frac \xi {3N/4}\right)$,  where $\psi$ is a fixed positive function belonging to $\mathcal C^\infty ([2/3, 4/3 ], \mathbb R)$
 such that $\psi (t) =1$ for $5/6 \leq t \leq 7/6$ and such that $0\leq \psi (t) \leq 1$, for all real $t$. 
 \end{itemize}
 
 In particular,  we see from the last condition that for all real $\xi$ and all integer $j\geq 0$
we have the inequality
$$
|f^{(j)} (\xi)|  \leq (4/3)^j N^{-j} \sup_t  |\psi^{(j)} (t)|.
$$  

 With these choices, we have $R=2$ and  $\eta=4/3$.  
  The error term in~\eqref{formulaforTDelta} is $O (N^{\frac{39}{20} +\varepsilon_0})$, where 
 $\varepsilon_0 >0$ is arbitrary. To deal with the main term, we first consider the
 integral. We have

 \begin{lemma}\label{Lebesgue} We adopt the definitions of $f$ and $g$ given above. 
 We also assume that $ 2\leq \Delta \leq N^2/1000$ and that $n_1 \in \mathcal N_1$ and $n_2 \in \mathcal N_2$. Furthermore we suppose that
 \begin{equation}\label{n1/n2}
 9/10\leq n_1/n_2 \leq 10/9.
 \end{equation}
 Then the Lebesgue  measure of the set of real  $x$ such that 
 $$
 f(x+\Delta/n_2) g(x/n_1) =1
 $$ is 
 at least
 $cN^2$, where $c>0$ is independent from $\Delta$, $N$, $n_1$ and $n_2$ as above. 
 \end{lemma}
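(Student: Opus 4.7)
The plan is to reduce the claim to a purely geometric question about the overlap of two intervals. Since $\psi$ takes value $1$ exactly on the plateau $[5/6,7/6]$ and is bounded by $1$ elsewhere, the identity $f((x+\Delta)/n_2)\,g(x/n_1)=1$ forces both factors to equal $1$. Thus the set in question is exactly $I_1\cap I_2$, where
$$
I_1 := \left[\tfrac{5Nn_1}{8},\ \tfrac{7Nn_1}{8}\right],
\qquad
I_2 := \left[\tfrac{5Nn_2}{8}-\Delta,\ \tfrac{7Nn_2}{8}-\Delta\right],
$$
and it suffices to produce an absolute constant $c>0$ with $|I_1\cap I_2|\ge cN^2$.

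Writing $I_i=[c_i-r_i,c_i+r_i]$ with $c_1=3Nn_1/4$, $c_2=3Nn_2/4-\Delta$ and $r_i=Nn_i/8$, I would first record the half-length bound $r_1,r_2\ge N^2/16$, coming from $n_1,n_2\ge N/2$. Next, the closeness assumption $9/10\le n_1/n_2\le 10/9$ gives $|n_1-n_2|\le\min(n_1,n_2)/9\le N/9$. Together with $\Delta\le N^2/1000$ this yields the centre separation bound
$$
|c_1-c_2| \le \tfrac{3N}{4}\,|n_1-n_2|+\Delta
\le \tfrac{N^2}{12}+\tfrac{N^2}{1000}.
$$

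Finally, for two intervals presented in centre--radius form one has the elementary identity
$$
|I_1\cap I_2|=\min\bigl(2r_1,\ 2r_2,\ r_1+r_2-|c_1-c_2|\bigr),
$$
whenever the last quantity is positive (the three cases correspond to $I_1\subseteq I_2$, $I_2\subseteq I_1$, and proper partial overlap). Each of $2r_1,2r_2$ is at least $N^2/8$, and
$$
r_1+r_2-|c_1-c_2|\ \ge\ \tfrac{N^2}{8}-\tfrac{N^2}{12}-\tfrac{N^2}{1000}\ >\ 0,
$$
so $|I_1\cap I_2|\gg N^2$ with an absolute constant, which proves the lemma.

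No real obstacle is expected: once the geometric reformulation is made, the argument reduces to bookkeeping of numerical constants. The generous margins in the hypotheses ($\Delta\le N^2/1000$ and $n_1/n_2$ within a factor of $10/9$) are calibrated precisely so that the sum $r_1+r_2$ of half-lengths comfortably dominates the separation $|c_1-c_2|$ of centres; if one wanted to state the lemma with sharper constants, these margins would have to be tightened correspondingly, but for the intended application in \eqref{721} any absolute $c>0$ suffices.
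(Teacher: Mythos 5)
Your proof is correct and follows essentially the same route as the paper: both reduce the statement to showing that the two explicit intervals $\left[\tfrac{5Nn_1}{8},\tfrac{7Nn_1}{8}\right]$ and $\left[\tfrac{5Nn_2}{8}-\Delta,\tfrac{7Nn_2}{8}-\Delta\right]$ overlap in a set of measure $\gg N^2$, the only difference being bookkeeping (you use the centre--radius formula for the intersection length, whereas the paper exhibits an explicit common subinterval $\left[\tfrac{25}{36}n_2N,\tfrac{63}{80}n_2N\right]$). The one cosmetic caveat is that the solution set need not be \emph{exactly} $I_1\cap I_2$ (the paper only asserts $\psi=1$ on the plateau, not that $\psi<1$ off it), but containment of $I_1\cap I_2$ is all the lower bound requires.
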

 \begin{proof} By the definition of the functions $f$ and $g$, we see that the set of $x$ such that $f(x+\Delta/n_2) g(x/n_1)=1$ is included in the set of $x$ satisfying the two conditions
 \begin{equation}\label{740}
 \begin{cases} 
 \frac 58 n_2N -\Delta \leq  x \leq \frac 78 n_2 N -\Delta \\
  \frac 58 n_1N\leq x \leq  \frac 78 n_1 N.
 \end{cases}
 \end{equation}
 By~\eqref{n1/n2}, the second condition of~\eqref{740} is satisfied when one has the inequalities
 $$
\frac 58 \cdot \frac {10}9 \, n_2 N\leq  x \leq  \frac 78 \cdot \frac 9{10}\, n_2 N.
$$
This formula defines  a non-empty segment, which is included in the segment corresponding to the first condition of~\eqref{740}. 
 \end{proof}
 
 Combining Lemma~\ref{Lebesgue} with Lemma~\ref{BettinChandee}, and recalling our above 
observation that  the error term   in~\eqref{formulaforTDelta} is $O (N^{\frac{39}{20} +\varepsilon_0})$,
 we deduce that if $2\leq \Delta \leq N^2/1000$, one has the inequality
 \begin{align*}
 \mathcal T_\Delta(N,N,N,N)\ &\gg N^2 \, \underset {\substack{n_1\in \mathcal N_1,\, n_2 \in \mathcal N_2 \\ 9/10 \leq n_1/n_2 \leq  10/9\\ \gcd(n_1,n_2) \mid \Delta}}
 {\sum \sum} \frac {\gcd(n_1,n_2)}{n_1n_2}  + O\(N^{\frac{39}{20} +\varepsilon_0}\)\\
 & \gg N^2.
 \end{align*}  
 We now return to~\eqref{721} to insert this lower bound. We sum over $\Delta$ with $(\Delta/p) =-1$ to complete the proof by appealing to Lemma~\ref{599}.
 
 \section{Proof of theorem~\ref{thesumU}} 
  \subsection{Preliminaries} 
  Let $\varepsilon >0$ be fixed  and $p ^{1/2} >N\geq p^{1/8 +\varepsilon}$.  We introduce the parameters $\varepsilon_1$ and $\varepsilon_2 >0$. Their values are  small and are given at the end of the proof, in terms of $\varepsilon$ and the constant $\delta$, (depending on $\varepsilon$) which appears in Lemma~\ref{Burgessclassical}.

 Clearly, without loss of generality, we can assume that $\vert  \alpha_a \vert \leq 1$ and $  \vert \beta_b\vert  \leq 1$ for admissible values of $a$ and $b$.   
  
   \subsection{Splitting the summations} The proof of this 
 Theorem~\ref{thesumU} uses Lemma~\ref{BettinChandee} in a crucial way. In order to apply it,  our first
 task is to decompose $U_\chi (\boldsymbol \alpha, \boldsymbol \beta, N ) $ into subsums where the orders  of magnitude of the variables $a$, $b$, $c$ and $d$ are controlled.    Let 
 $$
Y:= N^{\varepsilon_1}  \mand  \Xi := 1 + N^{-\varepsilon_1}=1+Y^{-1}. 
 $$
 We introduce the following finite set of real numbers:
 $$\mathfrak N:=  \left\{ N/2^\nu:~  N/2^\nu \ge 1, \ \nu = 0,1, \ldots \right\},
 $$  
 and  the counting function 
   $$
  \mathcal T_\Delta^* (N) :=  \underset{ \substack{ 1\leq a \leq N,\, 1\leq b \leq N,\, 1\leq c \leq N,\, 1\leq d \leq N \\ ad-bc=\Delta}}{\sum\  \sum \ \sum \ \sum} \alpha_a \beta_b.
  $$
 We decompose $ U_\chi  (\boldsymbol \alpha, \boldsymbol \beta, N )$ as
 \begin{equation} 
 \label{785}\
  U_\chi  (\boldsymbol \alpha, \boldsymbol \beta, N ) = U_\chi^+  (\boldsymbol \alpha, \boldsymbol \beta, N ) + U_\chi^-  (\boldsymbol \alpha, \boldsymbol \beta, N )
   \end{equation} 
   where   
 \begin{align*} 
& U_\chi^+  (\boldsymbol \alpha, \boldsymbol \beta, N ) 
   := \sum_{0< \Delta < N^2 } \chi (\Delta) \mathcal T_\Delta^* (N), \\
& U_\chi^-  (\boldsymbol \alpha, \boldsymbol \beta, N ) := \sum_{ -N^2<  \Delta <0} \chi (\Delta) \mathcal T_\Delta^* (N).
\end{align*} 
 The proof of Theorem~\ref{thesumU}  reduces to the existence of a positive $\delta_0$ such that
\begin{equation}\label{purpose}
U_\chi^+  (\boldsymbol \alpha, \boldsymbol \beta, N ) = O(N^{4 -\delta_0}),
\end{equation}
since the sudy of the sum $ U_\chi^-  (\boldsymbol \alpha, \boldsymbol \beta, N ) $ is similar.

Let us impose the following restriction 
    \begin{equation}\label{firstrestriction}
  \varepsilon_1 \geq \delta_0.
  \end{equation}

We decompose the  characteristic functions of the intervals of variations of $a$ and $b$ as a  sum of characteristic functions $\mathbf 1_{K}$ of intervals $[K/2, K]$ (see Section~\ref{sec:conv}), where $K$ belongs to $\mathfrak N$.   
For the  variables $c$ and $d$, we use the functions $b_{\ell, \Xi}$ introduced in Lemma~\ref{smoothpartition} with  $1\leq \ell \leq L$
where $L$ is the unique  integer satisfying 
\begin{equation}\label{XiL}
\Xi^{L+1} \leq N < \Xi^{L+2}.
\end{equation}
For a positive $\Delta$,  we have the equality
 \begin{equation}
\begin{split} \label{--}
  \mathcal T_\Delta^* (N) =\sum_{A\in \mathfrak  N}  \sum_{B\in \mathfrak N} & 
  \sum_{1\leq \ell, \newell \leq L}   \underset{\substack{1\leq a \leq N,\, 1\leq b \leq N,\, 1\leq c \leq N,\, 1\leq d \leq N \\ ad-bc=\Delta}}{\sum\  \ \sum \ \  \sum \  \ \sum} \\  & \qquad \quad \mathbf 1_{ A} (a) \alpha_a\cdot  \mathbf 1_B (b)\beta_b\cdot  b_{\ell, \Xi}(  c)  \cdot b_{\newell , \Xi} (d) + E(\Delta). 
\end{split}
\end{equation}
The complementary term  $E(\Delta)$ corresponds to the contribution of quadruples $(a,b,c,d)$  with $ad-bc =\Delta$, with $1\leq a, \, b \leq N$  and at least one of the variables $c$ or $d$
is in the interval $[\Xi^L, N]$.  We consider $E (\Delta)$ as an error term since, when   summing over $\Delta$ and appealing to the definition of $L$ (see~\eqref{XiL}) we obtain the bound 
\begin{equation}\label{E(Delta)onaverage}
\sum_{1\leq \Delta < N^2} \vert \chi (\Delta)  E (\Delta)\vert  \leq N^3\( \Xi^{L+2} -\Xi^L\) \ll N^4Y^{-1}. 
\end{equation}   
This error term is acceptable by the restriction~\eqref{firstrestriction}.

We write~\eqref{--}  
\begin{equation}\label{809} 
 \mathcal T_\Delta^* (N) =\sum_{A\in \mathfrak  N} \sum_{B\in \mathfrak N} \sum_{1\leq \ell \leq L} \sum_{1\leq \newell  \leq L}
\mathcal T_\Delta^* (  \Xi^\ell, \Xi^{\newell },A,B) + E (\Delta),
\end{equation}
with obvious notations. 
\subsection{Restricting the sommation over   $\Xi^\ell$, $\Xi^{\newell }$, $A$, $B$} Decompose
the sum in~\eqref{809} into
$$
 \mathcal T_\Delta^* (N) :=  \mathcal T_\Delta^* (N, \leq N^4Y^{-1} ) + \mathcal T_\Delta^* (N, > N^4Y^{-1})+E (\Delta)
 $$
 where
 \begin{itemize} 
 \item $ \mathcal T_\Delta^* (N, \leq N^4Y^{-1} ) $ is the quadruple sum (over   $\Xi^\ell$,  $\Xi^{\newell }$, $A$ and $B$)  restricted by the extra condition $ \Xi^{\ell} \Xi^{\newell } AB\leq N^4/Y$ and 
  \item $ \mathcal T_\Delta^* (N, >N^4Y^{-1} ) $ corresponds to the restriction  $ \Xi^{\ell} \Xi^{\newell }AB >N^4Y^{-1}$.
  \end{itemize}
Changing the order of summation, similarly to~\eqref{E(Delta)onaverage}, we  deduce the  obvious inequality
  \begin{equation}\label{822}
  \sum_{1\leq \Delta<N^2 } \chi (\Delta)  \mathcal T_\Delta^* (N, \leq N^4Y^{-1} )  = O( N^4Y^{-1}).
  \end{equation} 
  This error term is acceptable under the restriction~\eqref{firstrestriction}.
  
  We now concentrate on the sum $ \mathcal T_\Delta^* (N, > N^4Y^{-1} )$.  Actually, the inequalities $  \Xi^\ell,\,  \Xi^{\newell }, \, A, \, B \leq N$ combined with $ \Xi^\ell\, \Xi^{\newell } AB>N^4 Y^{-1}$  imply that  $ \Xi^\ell$,  $\Xi^{\newell }$, $A$ and $B$ are of comparable sizes, which means 
\begin{equation}\label{alllarge}
  NY^{-1} \leq   \Xi^\ell, \,  \Xi^{\newell }, \, A, \, B\leq N.
  \end{equation}
  Furthermore the number $Q$ of such quadruples $(  \Xi^\ell, \Xi^{\newell }, A,B)$ is bounded by
\begin{equation}\label{numberofdissections}
 Q  \ll (\log N)^2 \left( \frac{\log N}{\log (1+Y^{-1})}\right)^2 \ll Y^2 (\log N)^4 .
  \end{equation}
  
 
  \subsection{Bounding the error term} In order to prove~\eqref{purpose}, we are now led to  proving the inequality
 \begin{equation}
\begin{split} \label{835}
 \sum_{1\leq \Delta <N^2 } \chi (\Delta)  \underset{\substack{ 1\leq a \leq N,\, 1\leq b \leq N,\, 1\leq c \leq N,\, 1\leq d \leq N \\ ad-bc=\Delta}}{\sum\   \sum \   \sum \   \sum} \mathbf 1_{ A} & (a) \alpha_a\cdot  \mathbf 1_B (b)  \beta_b 
   \cdot  b_{\ell, \Xi}(  c) \cdot  b_{\newell , \Xi} (d ) 
 \ll N^{4-\delta_0 -3\varepsilon_1}, 
\end{split}
\end{equation}  
  for all $(  \Xi^\ell, \Xi^{\newell }, A, B) $ satisfying~\eqref{alllarge}.
  This is a consequence of ~\eqref{785}, \eqref{E(Delta)onaverage}, \eqref{809}, \eqref{822} 
  and~\eqref{numberofdissections}.   
  
    In~\eqref{835}, the quadruple sum is exactly  $\mathcal T_\Delta (\Xi^\ell, \Xi^{\newell }, A, B)$ 
     which is treated in Lemma~\ref{BettinChandee} and which we apply 
    with the following values
    $$
\begin{cases}
M_1:= \Xi^{\ell +1},\\
M_2: = \Xi^{\newell  +1},\\
N_1:= A,\\
N_2 := B, \\
f(\xi) :=b_{\ell, \Xi} (\xi), \\
g(\xi):= b_{\newell , \Xi}(\xi).
\end{cases}
$$
The corresponding value of $R$, defined in~\eqref{defR},  satisfies the inequality
  $$
  R \leq 2Y^2, 
  $$
  as a consequence of~\eqref{alllarge}. The corresponding value of $\eta$ is 
  $$
  \eta  = \frac{\Xi}{\Xi-1}\leq 2Y.$$
  
  We now fix one of the quadruples $(  \Xi^\ell, \Xi^{\newell }, A,B) $ satisfying~\eqref{alllarge}
  for which we need to estimate the sum $   \mathcal T_\Delta (\Xi^\ell, \Xi^{\newell }, A, B)$.
  
  Next, we appeal to the formula~\eqref{formulaforTDelta} that we write under the form
  \begin{equation}\label{MT+ET}
   \mathcal T_\Delta (\Xi^\ell, \Xi^{\newell }, A, B):= {\rm MT} (\Delta) + {\rm Err} (\Delta).
\end{equation}
  With the above values of the parameters, we see that the contribution of the error term to the left hand side of~\eqref{835} is
 \begin{align*} 
 \sum_{1\leq \Delta <N^2} \chi( \Delta)  {\rm Err} (\Delta)
& \ll  \sum_{1\leq \Delta<  N^2 } (Y^3)^{3/2} A^{1/2} B^{1/2} (AB)^{7/20} (A+B)^{1/4 +\varepsilon_1} (\Xi^{\ell} \Xi^{\newell })^{\varepsilon_1}\\
&\ll  N^{79/20 +9 \varepsilon_1/2 + \varepsilon_1 + 2\varepsilon_1} \ll  N^{79/20 +8 \varepsilon_1}.
\end{align*}  
  This bound fulfils the bound~\eqref{835}, provided that we have 
  \begin{equation}\label{secondrestriction}
  \delta_0 +11 \varepsilon_1  \leq \frac 1{20}.
  \end{equation}
  \subsection{Application of the Burgess bound}
Consider now the contribution of the main term  ${\rm MT}(\Delta)$ (see~\eqref{MT+ET})   to the left hand side of~\eqref{835}. After inverting summations and integration, we see that this contribution is    
 \begin{equation}
\begin{split}
\label{crucial}
\sum_{1\leq \Delta < N^2} \chi (\Delta)  {\rm MT}(\Delta) 
& =  \sum_{ 1\leq a \leq N} \sum_{ 1\leq b \leq N} \mathbf 1_A (a) \alpha_a \cdot \mathbf 1_B (b) \beta_b \cdot \frac {\gcd(a,b)}{ab} \\ 
&\qquad \qquad  \times 
 \int_{\mathbb R} b_{\newell , \Xi} \left(\frac  x a \right) \left( \sum_{\substack{1\leq \Delta < N^2  \\
  \gcd(a,b) \mid \Delta}} \chi (\Delta) b_{\ell, \Xi}\left( \frac{x+\Delta} {b}\right)\right) dx. 
\end{split}
\end{equation}    
  We want to exploit the oscillations of the character $\chi (\Delta)$ by using Lemma~\ref{Burgessclassical}. However, this requires several technical preparations. First of all, in the integral in~\eqref{crucial}, we can reduce  the variable $x$ to the interval
  $$
  \Xi^{\newell -1} \leq x/a \leq  \Xi^{\newell +1} ,
   $$
  which implies that 
  \begin{equation}\label{sizeofx}
   x \leq N^2.
   \end{equation}
   We  now establish the following estimate. 
   
   \begin{lemma} \label{byparts} Let $\chi$ be a non-principal character modulo $p$. Let $\varepsilon >0$ and let $\delta$ be defined by Lemma~\ref{Burgessclassical}.
  We have the bound
   \begin{equation*}
   \sum_{\substack{1\leq \Delta < N^2  \\ t \mid \Delta}}  \chi (\Delta)  b_{\ell, \Xi}\left( \frac{x+\Delta} {b}\right)
   \ll \min  
   \left( N^2t^{-1}, tY^3N^{-2} p^{1/2 +2\varepsilon} + t^{-1}Y^3N^{2} p^{-\delta}\right)
   \end{equation*}
  under the conditions~\eqref{alllarge},  uniformly for $t\geq 1$, for $x$ satisfying~\eqref{sizeofx} and for $N \geq p^{1/8 + \varepsilon}$.
   \end{lemma}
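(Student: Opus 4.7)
The plan is as follows. First, if $p \mid t$ then every $\Delta$ in the sum is divisible by $p$, so $\chi(\Delta)=0$ and both bounds are trivial; we may therefore assume $\gcd(t,p)=1$. The first bound $N^2/t$ is immediate from $|\chi|\le 1$, $|b_{\ell,\Xi}|\le 1$, and the fact that there are at most $N^2/t$ multiples of $t$ in $[1,N^2]$. It remains to establish the second bound.

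Making the substitution $\Delta=tm$ and setting
$$\tilde g(m) := b_{\ell,\Xi}\!\left(\frac{x+tm}{b}\right),$$
the sum becomes $\chi(t)\sum_{1\le m\le N^2/t}\chi(m)\tilde g(m)$, so it suffices to bound this inner sum. I would apply Abel summation, writing $G(u):=\sum_{1\le m\le u}\chi(m)$, which gives
$$
\sum_{1\le m\le N^2/t}\chi(m)\tilde g(m)=G(N^2/t)\tilde g(N^2/t) -\int_1^{N^2/t} G(u)\tilde g'(u)\,du.
$$
The key estimate is the derivative bound. Lemma~\ref{smoothpartition} with $\nu=1$ gives $|b'_{\ell,\Xi}(\xi)|\ll \xi^{-1}\Xi(\Xi-1)^{-1}$; since $\Xi=1+Y^{-1}$ so that $(\Xi-1)^{-1}=Y$ and $\Xi\le 2$, and since the support together with~\eqref{alllarge} forces $\xi=(x+tm)/b\gg NY^{-1}$ while $b\gg NY^{-1}$, the chain rule yields
$$|\tilde g'(m)|\ll \frac{t}{b}\cdot\frac{Y}{\xi}\ll \frac{tY^{3}}{N^{2}}.$$

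To finish, I would bound $G(u)$ in two regimes: by Lemma~\ref{Burgessclassical}, $|G(u)|\ll u\,p^{-\delta}$ for $u\ge p^{1/4+\varepsilon}$, while trivially $|G(u)|\le u$ for $u<p^{1/4+\varepsilon}$. Splitting the integral at the Burgess threshold $u_0:=p^{1/4+\varepsilon}$ produces
$$\int_1^{N^2/t}|G(u)|\,du\ll \int_1^{u_0} u\,du+p^{-\delta}\int_{u_0}^{N^2/t} u\,du\ll p^{1/2+2\varepsilon}+\frac{N^{4}}{t^{2}}p^{-\delta},$$
and multiplying by the bound on $|\tilde g'|$ delivers exactly $tY^{3}N^{-2}p^{1/2+2\varepsilon}+t^{-1}Y^{3}N^{2}p^{-\delta}$. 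The boundary term $|G(N^{2}/t)\tilde g(N^{2}/t)|\le |G(N^{2}/t)|$ is dominated by the same pair of quantities (either by the Burgess bound or the trivial one), so it is absorbed.

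The argument is essentially mechanical; the only delicate point is the derivative estimate, where the smallness $\Xi-1=Y^{-1}$ forces the factor $Y$ but is precisely what the smooth partition from Lemma~\ref{smoothpartition} is designed to accommodate. The hypothesis $N\ge p^{1/8+\varepsilon}$ does not enter this particular bound directly, but will matter at the next step (in~\eqref{crucial}) to ensure that the Burgess saving survives after summing over $a$ and $b$.
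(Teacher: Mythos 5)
Your proof is correct and follows essentially the same route as the paper: the trivial first bound, the substitution $\Delta=t\Delta^*$, Abel summation with the derivative bound $O(tY^3N^{-2})$ coming from Lemma~\ref{smoothpartition} and~\eqref{alllarge}, and the integral split at the Burgess threshold $p^{1/4+\varepsilon}$. You are in fact slightly more careful than the paper, which silently drops the boundary term of the Abel summation and does not comment on the case $p\mid t$ needed to invoke multiplicativity.
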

   \begin{proof}
   The first part of the above bound is trivial. For the second one, we put $\Delta = t \newDelta  $. By multiplicativity  we are led to bound the sum 
   $$
  S:= \sum_{1\leq \newDelta < N^2 t^{-1}}  \chi (\newDelta  )\,  b_{\ell, \Xi}\left( \frac{x+t\newDelta  } {b}\right).
   $$
 We plan to use Abel summation.   
 First we observe that by Lemma~\ref{smoothpartition} and since the quadruple $(\Xi^\ell, \Xi^{\newell}, A,B) $ satisfies~\eqref{alllarge}, 
 the derivative of the function $$\newDelta   \mapsto b_{\ell, \Xi}\left( \frac{x+t\newDelta  } {b}\right)$$  is bounded by
   $$
   O\(\frac tb \cdot  Y \Xi^{-\ell}\) = O\( t Y^3 N^{-2}\). 
   $$ 
   So we have the inequality
   \begin{align*}
   S & \ll tY^3N^{-2}\left( \int_1 ^{p^{1/4+\varepsilon}}  + \int_{p^{1/4 +\varepsilon}}^{N^2t^{-1} }\right) \left(\sum_{1\leq \newDelta   \leq u} \chi (\newDelta) \right)du\\
   & \ll tY^3N^{-2} \left\{ p^{1/2+ 2\varepsilon}     +N^{4}t^{-2} p^{-\delta}\right\},
   \end{align*}
   where $\delta>0$ is  defined in  Lemma~\ref{Burgessclassical}  and  depends on $\varepsilon$. 
   \end{proof}
   We return to~\eqref{crucial} and  insert the bound given by Lemma~\ref{byparts}. Let 
   $$
   T= N^{\varepsilon_2}.$$
   Writing $t = \gcd(a,b)$, $a= \newa t  $ and $b =\newb t$ to  remark that the contribution to $\sum_{1\leq \Delta < N^2} {\rm MT}(\Delta)$ of the $(t,  \newa, \newb)$  with $t\geq T $ is negligible, by using the bound $N^2t^{-1}$ for $S$ and~\eqref{sizeofx} for the length of the interval of integration.
   Indeed this contribution  $\fA_1$ can be bounded as follows
\begin{equation}
\label{eq:A1}
 \fA_1  \ll
   \sum_{  t \geq T}  \sum_{A/(2t)\le  \newa \le A/t} \sum_{B/(2t) \le  \newb\le B/t} \frac{1}{t \newa \newb}  N^2 (N^2t^{-1}) \ll N^4 T^{-1}. 
\end{equation}      

   The bound~\eqref{eq:A1} on  $\fA_1$ is compatible with the desired bound~\eqref{835} as soon as we have the inequality
   \begin{equation}\label{N4T-1}
   \delta_0 +3\varepsilon_1 \leq \varepsilon_2.
   \end{equation}
   
To deal with the contribution of the $(t, \newa,\newb)$ with $1\leq t \leq T$, we use the second part of the upperbound given by  Lemma~\ref{byparts}. More precisely this    contribution $\fA_2$  can be bounded as follows
\begin{equation*}
\fA_2  \ll   \sum_{t\leq T} \sum_{A/(2t)\le  \newa \le A/t} 
 \quad \sum_{B/(2t) \le  \newb\le B/t} \frac{1}{t \newa \newb} \, N^2  \left(  t Y^3 N^{-2} p^{1/2 +2\varepsilon} +   t^{-1} Y^3 N^2 p^{-\delta} \right).
\end{equation*}
Therefore
\begin{equation}
\label{end}
\fA_2   \ll T Y^3 p^{1/2 +2 \varepsilon} + N^4 Y^3 p^{-\delta}
  \ll TY^3 N^4 p^{-2\varepsilon} + N^4 Y^3 p^{-\delta}\ll  TY^3 N^{4- 4\varepsilon} +  Y^3 N^{4-2\delta},  
\end{equation}    
 by the assumption $p^{1/2} \geq  N\geq p^{1/8+\varepsilon}$. The bound~\eqref{end} on  $\fA_2$ is compatible with the desired bound~\eqref{835} as soon as one has the two inequalities
 \begin{equation}\label{endofend}
 \delta_0+6\varepsilon_1+\varepsilon_2 \leq 4 \varepsilon \text{ and } \delta_0 + 6 \varepsilon_1 \leq 2 \delta.
 \end{equation}
 \subsection{Choice of the parameters} Recall that when $\varepsilon >0$ is given, the  parameter $\delta >0$    has a fixed value. To complete the proof of Theorem~\ref{thesumU}, it remains to  find positive values for $\varepsilon_1$ and $\varepsilon_2$
and $\delta_0$   such that  the inequalities~\eqref{firstrestriction},   \eqref{secondrestriction}, \eqref{N4T-1} and~\eqref{endofend} are satisfied. By choosing \ 
$$
 \begin{cases}
 \varepsilon_1 = \min \left( 1/240, 4\varepsilon/11, 2\delta/7 \right),\\
 \varepsilon_2 = 4 \varepsilon_1,\\
 \delta_0=\varepsilon_1,
 \end{cases}
 $$
 we complete the proof of Theorem~\ref{thesumU}.

\section{Proof of Theorem~\ref{thm:SumTD1} }\label{section3}
Factoring out $\chi(c)$, we write the equality
\begin{equation}\label{T=Tn0+}
T_\chi( A,B,C, \cD;\balpha)  =  \sum_{1\leq a \leq A} \, \sum_{1\leq b \leq B} \, \sum_{1\leq c \leq C} \,   \left| \sum_{d \in \cD} \alpha_d \chi(ab/c-d)\right| .
\end{equation}
Let $I$ be the counting function defined, for any integer  $1\leq \lambda\leq p-1$,  by the formula 
$$
I(\lambda) = \sharp\{(a,b,c) \in [1,A]\times [1, B]\times [1,C] :~ab/c \equiv \lambda \bmod p\}.
$$
Therefore, we can write~\eqref{T=Tn0+} as
\begin{align*}
T_\chi ( A,B,C, \cD;\balpha)   & =   \sum_{\lambda=1}^{p-1} I(\lambda) 
 \left| \sum_{d \in \cD} \alpha_d \chi(\lambda-d)\right|\\
 & =  \sum_{\lambda=1}^{p-1} \,   \left| \sum_{d \in \cD} \alpha_d \chi(\lambda-d)\right|\cdot  I(\lambda)^{1-1/\nu}  \cdot I (\lambda)^{1/\nu},
\end{align*}
where $\nu \geq 1$ is an integer.   

We apply H\" older's inequality to this trilinear form with the choice of exponents  
$$
\(\frac 1{2\nu}, 1 - \frac 1{\nu}, \frac 1{2\nu}\),
$$ 
leading to the inequality
\begin{equation}
\label{538}
\begin{split} 
T_\chi( A,B,C, \cD;\balpha)^{2\nu}  
&\leq  \sum_{\lambda=1}^{p-1} \,   \left| \sum_{d \in \cD} \alpha_d \chi(\lambda-d)\right|^{2\nu} \cdot 
\left( \sum_{\lambda =1}^{p-1} I(\lambda) \right)^{2\nu -2}  \cdot \left( \sum_{\lambda =1}^{p-1} I(\lambda)^2 \right) \\
&\leq \Sigma_1 \cdot \Sigma_2 \cdot \Sigma_3, 
\end{split}
\end{equation}
with an obvious definition of $ \Sigma_1$,  $\Sigma_2$ and  $\Sigma_3$.

Lemma~\ref{lem: D-E Lem*1} gives the following  bound for $\Sigma_1$
\begin{equation}\label{Sigma1<}
\Sigma_1 \ll (2\nu D)^\nu p +2\nu D^{2\nu} p^{1/2}.
\end{equation}
 For $\Sigma_2$ we use the trivial remark that
\begin{equation}\label{Sigma2<}
\Sigma_2 = (ABC)^{2\nu-2}.
\end{equation}
For $\Sigma_3$, we  benefit from the inequality  $ABC <p$, and  write 
\begin{align*}
\Sigma_3&= \sharp \Bigl\{  (a_1,a_2,b_1,b_2,c_1,c_2)\in [1,A]^2 \times [1,B]^2 \times [1, C]^2:
  a_1b_1c_2 = a_2b_2c_1\Bigr\}
 \leq \sum_{1\leq m \leq ABC} \tau_3^2 (m),
\end{align*}
which finally gives
\begin{equation}\label{Sigma3<}
\Sigma_3 \ll ABC (\log p)^8
\end{equation}
by Lemma~\ref{lem:tau31}. 
Combining~\eqref{538},  \eqref{Sigma1<}, \eqref{Sigma2<} and~\eqref{Sigma3<}, we complete the proof of Theorem~\ref{thm:SumTD1}.
\section{Proof of Theorem~\ref{thm:SumTN1}}
\subsection{Preliminary transformations} 
\label{sec:prelim}
 As an  immediate consequence of Corollary~\ref{cor:SumTD1}, we know that the statement of Theorem~\ref{thm:SumTN1} is correct  if one
imposes the extra condition $p >N > p^{1/6 +\varepsilon}$. So, 
  without loss of generality we can assume that 
\begin{equation}
\label{eq: small N1}
N \le 0.5\,  p^{1/4}. 
\end{equation}

We fix some $\varepsilon > 0$ and observe that by Corollary~\ref{cor:SumTD1} we can 
assume that $\varepsilon < 1/20$.  We also set 
\begin{equation}
\label{eq:choice of eta1}
\eta =    \frac{4 \varepsilon^{-1}  \log \log p}{\log p}.
 \end{equation}  
Then we define 
\begin{equation}
\label{eq:choice of xy1}
x = p^{\eta} = \left(\log p\right)^{ 4/ \varepsilon  } \mand y = p^{\varepsilon} .
 \end{equation}   
 
Next, for an integer $r \ge 0$ we
denote by $\cA_r(N;x,y)$ the set of positive integers $d \le N$
which have exactly $r$ prime divisors (counted with multiplicities) in the
 half-open interval $\cI = (x,y]$.
In particular,  the cardinality of $\cA_0(N;x,y)$ has been  estimated in  Lemma~\ref{lem:ANxy1}.
Let $R$ be the largest value of $r$ for which $\cA_r(N;x,y) \ne \emptyset$.  In particular, we have the
trivial bound  
\begin{equation}
\label{eq: R triv1}
R \ll \frac{\log N}{\log x}.
\end{equation}  

We now write the formula of decomposition 
$$
T_\chi( N) = \sum_{1 \le a,b,c\le N}  \left| \sum_{1\le d\le N}  \chi(ab/c-d)\right|
 \leq \sum_{r =0}^R  U_r,
$$
where
$$
U_r =  \sum_{1 \le a,b,c\le N}  \left|
 \sum_{d \in \cA_r(N;x,y)}  \chi(ab/c-d)\right|. 
$$
Estimating $U_0$ trivially as
$$
U_0 \leq N^3 \cdot ( \sharp\cA_0(N;x,y) ), 
$$
and using Lemma~\ref{lem:ANxy1}, we obtain the inequality 
\begin{equation}
\label{eq: T Ur U01}
T_\chi( N)  \ll  \sum_{r =1}^R  U_r +  N^4 \frac{\log x}{\log y}
\ll   \sum_{r =1}^R  U_r +  \eta N^4.
\end{equation}
Clearly there are at most $N/(x\log 2x)$  integers 
$$
d \in  \bigcup_{r=1}^R \cA_r(N;x,y) ,
$$
which are divisible by a square of a prime $p \in \cI$. This is a consequence of 
Lemma~\ref{Cheby}. Hence we can rewrite~\eqref{eq: T Ur U01} as 
\begin{equation}
\label{eq: T Ur* U0 N4/x1}
T_\chi( N)  \ll  \sum_{r =1}^R  U_r^* +  \eta N^4  + N^4/(x\log 2x), 
\end{equation}
with 
$$
U_r^* =  \sum_{1 \le a,b,c\le N}  \left|
 \sum_{d \in \cA_r^*(N;x,y)}  \chi(ab/c-d)\right|, 
$$
where the set $\cA_r^*(N;x,y) \subseteq \cA_r(N;x,y)$ 
is defined as  the set of positive integers $d \le N$
which have exactly $r$  distinct prime divisors $p\in \cI$.
 
Clearly, every integer $d \in \cA_r^*(N;x,y)$,
has exactly $r$ representations as $d = \ell m$
with a prime $\ell \in \cI$  and integer $m \in \cA_{r-1}^*(N/\ell;x,y)$.
Hence, for $r =1, \ldots, R$, we have 
\begin{align*}
U_r^* & = \frac{1}{r}  \sum_{1 \le a,b,c\le N}  \left| \,
 \sum_{\ell \in \cI}  \sum_{\substack{m\in \cA_{r-1}^*(N/\ell;x,y)\\ \gcd(\ell,m)=1}} \chi(ab/c-\ell m)\, \right|\\
&   = \frac{1}{r}  \sum_{1 \le a,b,c\le N}  \left| \,
 \sum_{\ell \in \cI} \  \sum_{\substack{m\in \cA_{r-1}^*(N/\ell;x,y)\\ \gcd(\ell,m)=1}} \chi (m) \ 
  \chi\(ab/(c m)-\ell\)\, \right|, 
\end{align*}
where throughout the proof, $\ell$ always denotes a prime number. Changing the order of summation, we now write the inequality
\begin{equation}
\label{eq:UrVr1}
U_r^* \leq  \frac{1}{r} V_r, 
\end{equation}
where,  
$$
V_r : =   \sum_{1 \le a,b,c\le N}  \sum_{m\in \cA_{r-1}(N/x ;x,y)}  
   \left|\sum_{\substack{\ell \in \cI\cap[1, N/m]\\ \gcd(\ell,m)=1}}
 \chi\(ab/(c m)-\ell\)\right| .
$$
Let
$V^\dag_r$ the following modification of
 $V_r$:
 $$
 V^\dag_r  :=\sum_{1 \le a,b,c\le N}  \sum_{m\in \cA_{r-1}(N/x ;x,y)}  
   \left|\sum_{\substack{\ell \in \cI\cap[1, N/m] }}
 \chi\(ab/(c m)-\ell\)\right| 
 $$
 To control the gap between $V_r $ and $V_r^\dag$, we introduce
 $$
E_r :=  \vert V_r -V_r ^\dag \vert \ll N^3 \sum_{x < \ell \leq y } \sum_{\substack{m\in \mathcal A_r (N; x,y) \\ \ell^2 \mid m}} 1. 
 $$
 Summing over all possible $r \leq R$, we have the following upper bound  
 \begin{equation}\label{sumEr<<}
 \sum_{r\leq R}  E_r \ll N^4 /(x \log x),
 \end{equation}
 by Lemma~\ref{Cheby}. 
 
 Furthermore, in the definition of $V_r^\dag$,  we also replace the  condition 
$m\in \cA_{r-1}(N/x ;x,y)$   by $m\leq N/x$. We split this interval into  $m \le N/y$ 
and $N/y  <m \le  N/x$, which of course simply increases the sum. Thus,  from the triangular inequality $V_r \leq V_r^\dag +E_r$, we derive 
\begin{equation}
\label{eq:VrWr121}
V_r \ll  W_{1} +  W_{2}  +  E_r.   
 \end{equation}
where 
\begin{align}
W_{1}  & =   \sum_{1 \le a,b,c\le N}  \sum_{m \le N/y}   
   \left|\sum_{\ell \in \cI} \chi\(ab/(c m)-\ell\)\right|, \label{defW1}\\
W_{2}  & =   \sum_{1 \le a,b,c\le N}  \sum_{N/y < m\leq  N/x}
   \left|\sum_{\ell \in [1, N/m]}
 \chi\(ab/(c m)-\ell\)\right|.\label{defW2}
\end{align}

Note that we dropped the subscript $r$ in the sums $W_1$ and $W_2$ as they
no longer depend on $r$. 
Gathering~\eqref{eq: T Ur* U0 N4/x1}, \eqref{eq:UrVr1}, \eqref{sumEr<<} and~\eqref{eq:VrWr121},  we proved the inequality
\begin{equation}\label{779}
T_\chi (N) \ll  (W_1+W_2) \left( \sum_{r\leq R} \frac 1r \right) +   \eta N^4  + N^4/(x\log 2x). 
\end{equation}
The purpose of the next two sections is to give suitable bounds for $W_1$ and $W_2$. 

\subsection{Study of the sum $W_{1}$} \label{sec:W11}
To estimate $W_{1}$ defined in~\eqref{defW1}, we collect together the ratios $ab/(c m)$ which fall in the same residue class modulo $p$ and define
\begin{equation}
\label{eq:J-lambda1}
J(\lambda) = \sharp\{(a,b,c,m) \in [1,N]^3\times   [1,N/y]:
  ab/(c m) \equiv \lambda \bmod p\}.
 \end{equation}  
Hence 
$$
W_{1}   =   \sum_{\lambda=1}^{p-1} J(\lambda) 
   \left|\sum_{\ell \in \cI} \chi\(\lambda-\ell\)\right|.
$$
We now see that the H{\"o}lder inequality (with the same exponents as in \S\ref{section3})   and Lemma~\ref{lem: D-E Lem*1}
(after discarding the primality condition on $\ell$ to simplify the final expression, 
which is inconsequential for the final result) yields the inequality 
\begin{equation}
\label{eq:Wr1-Holder-11}
W_{1}^{2\nu}  \le   \(\(2\nu y\)^\nu p + 2\nu y^{2\nu} p^{1/2}\)\(  \sum_{\lambda=1}^{p-1} J(\lambda) \)^{2\nu-2} 
 \sum_{\lambda=1}^{p-1} J(\lambda)^2 . 
 \end{equation}
  Recalling the choice of $y$ in~\eqref{eq:choice of xy1}, we see that with
\begin{equation}\label{defnu}
\nu = \rf{\varepsilon^{-1}},
\end{equation}
we have $y^\nu p \ll  y^{2\nu} p^{1/2}$ and~\eqref{eq:Wr1-Holder-11} becomes 
\begin{equation}
\label{eq:Wr1-Holder-21}
W_{1}^{2\nu}  \ll y^{2\nu} p^{1/2}\(  \sum_{\lambda=1}^{p-1} J(\lambda) \)^{2\nu-2} 
 \sum_{\lambda=1}^{p-1} J(\lambda)^2  . 
 \end{equation}
 
We clearly have 
\begin{equation}
\label{eq:J11}
\sum_{\lambda=1}^{p-1} J(\lambda) \ll N^4/y
 \end{equation}
while using condition~\eqref{eq: small N1} we derive that 
\begin{align*}
\sum_{\lambda=1}^{p-1}  J(\lambda)^2= \sharp\{(a_1,a_2,b_1,b_2,c_1,c_2,m_1,m_2)
 \in [1,N]^6\times  [1,N/y]^2: 
 a_1b_1 c_2 m_2  = a_2b_2 c_1 m_1\}.
 \end{align*}
Lemma~\ref{lem:tau31} with $s=4$,  now implies the inequality
\begin{equation}
\label{eq:J21}
 \sum_{\lambda=1}^{p-1} J(\lambda)^2 \le \left(  N^4/y \right) \,\log^{15} N.
 \end{equation}
 Hence, substituting the bounds~\eqref{eq:J11} and~\eqref{eq:J21}
 in~\eqref{eq:Wr1-Holder-21} we obtain 
$$
W_{1}^{2\nu}  \ll N^{8\nu - 4} p^{1/2} y \, (\log p)^{15} \ll   N^{8\nu} p^{- 3\varepsilon}\, (\log p)^{15},
$$
by the hypothesis~\eqref{N>p1/8}. 
 Therefore 
$$
W_{1} \ll    N^{4} p^{- 3\varepsilon /2\nu} \, (\log p)^{15/2\nu}.
$$
  Recall that $\nu$ is defined in~\eqref{defnu}, so this bound has the shape
    \begin{equation}
\label{eq:Wr1-fin1}
W_{1} \ll    N^{4} p^{-c_0 \varepsilon^2},
 \end{equation}
 for some absolute positive $c_0$.
  The bound~\eqref{eq:Wr1-fin1} is sufficient for our purpose,
as it is shown by~\eqref{779} and the value of $R$.

 \subsection{Study of the sum $W_{2}$} 
Clearly the sum $W_{2}$, defined in~\eqref{defW2}, is less than the  sum of $O(\log p)$ sums of the form
$$
W_{2}(z)=     \sum_{1 \le a,b,c\le N}  \sum_{N/z  < m \le 2N/z}
   \left|\sum_{\ell \in [1, N/m]}
 \chi\(ab/(c m)-\ell\)\right|. 
$$
with some integers $z \in [x,y]$.
This implies that there exists  some integer $z \in [x,y]$ for which one has the inequality 
\begin{equation}
\label{eq: WrWz1}
W_{2}\ll W_{2} (z) \log p.
\end{equation}
Using the standard completing technique (see~\cite[Section~12.2]{IwKow}),  we can make the range of summation over $\ell$ independent of $m$, by appealing to the additive character $\e$.  More precisely,  for every $N/z< m \leq 2N/z$,   we have the equalities  
\begin{align*}
\sum_{\ell \in [1, N/m]} 
 \chi\(ab/(c m)-\ell\) 
 & = \sum_{\ell \in [1, z]}
 \chi\(ab/(c m)-\ell\)\left(\,  \frac{1}{z} \sum_{h=0}^{z-1} \sum_{r \in [1, N/m]} \e\( h(\ell-r)/z\)\,\right)\\
  & =  \frac{1}{z}  \sum_{h=0}^{z-1}  \sum_{r \in [1, N/m]} \e\(-h r/z\)   \sum_{\ell \in [1, z]}
 \chi\(ab/(c m)-\ell\) \e \(h \ell/z\). 
 \end{align*}
 
 Using~\cite[Bound~(8.6)]{IwKow}, we now derive 
\begin{equation}
\label{eq: WzWhz1}
W_{2} (z) \ll \sum_{h = -z}^{z} \frac{1}{|h|+1} W_{2} (h; z) , 
\end{equation}
where 
$$
W_{2} (h; z) =     \sum_{1 \le a,b,c\le N}  \sum_{N/z \le m \le 2N/z}
   \left|\, \sum_{\ell \in [1, z]}
 \chi\(ab/(c m)-\ell\)\e \(h \ell/z\)\, \right|. 
$$
We now proceed as in Section~\ref{sec:W11}, however instead of~\eqref{eq:J-lambda1}
we  now define
$$
J(z,\lambda) = \sharp\{(a,b,c,m) \in [1,N]^3\times[1,2N/z]:~ab/(cm) \equiv \lambda \bmod p\},
$$
(note that at this point  we discard the lower bound $m \ge N/z$).
With this notation, we have 
$$
W_{2} (h; z) \le     \sum_{\lambda=1}^p J(z,\lambda) 
   \left|\, \sum_{\ell \in [1, z]}
 \chi\(\lambda -\ell\)\e \(h \ell/z\)\, \right|. 
$$
We have the following full analogues of~\eqref{eq:J11}  and~\eqref{eq:J21}  (with $z$ instead of $y$):
\begin{equation}
\label{eq:Jz11}
\sum_{\lambda=1}^{p-1} J(z,\lambda) \ll N^4/z
 \end{equation}
and
\begin{equation}
\label{eq:Jz21}
 \sum_{\lambda=1}^{p-1} J(\lambda)^2 \ll \left( N^{4}/z \right)\log^{15} N.
 \end{equation}
Using  Lemma~\ref{lem: D-E Lem*1} and the bound~\eqref{eq:Jz11} and~\eqref{eq:Jz21}, 
similarly to~\eqref{eq:Wr1-Holder-11}  (with $z$ instead of $y$) 
we derive 
\begin{equation}\label{903}
W_{2} (h; z) ^{2\nu}  \ll  \(N^4/z\)^{2\nu-1} 
 \(\(2\nu z\)^\nu p + 2\nu z^{2\nu} p^{1/2}\) \log^{15} p.
\end{equation}
However, we emphasize that we now take the parameter $\nu$ to be a function of $z$ 
and thus it is not uniformly bounded anymore. Hence we include 
the dependence on $\nu$ in our estimates.

Using the crude bound   
\begin{equation}
\label{eq:crude1}
\(\(2\nu z\)^\nu p + 2\nu  z^{2\nu} p^{1/2}\)
\le  \(2\nu \)^\nu  \( z^\nu p +  z^{2\nu} p^{1/2}\),
 \end{equation}
 and choosing 
$$
\nu = \fl{\frac{\log p}{\log z}}
$$
so the last term in~\eqref{eq:crude1} dominates. Returning to~\eqref{903}, 
we obtain the inequality
\begin{align*}
W_{2} (h; z) ^{2\nu}  &  \ll    \(2\nu \)^\nu\(N^4/z\)^{2\nu-1} z^{2\nu} p^{1/2} \log^{15} p \\
& =
  \(2\nu \)^\nu N^{8\nu - 4} p^{1/2} z \log^{15} p.  
 \end{align*}
 Using that $z \le y$ and recalling~\eqref{N>p1/8} and ~\eqref{eq:choice of xy1}
 (and then  replacing   $\log^{15} p $ with  $p^{\varepsilon}$) we obtain 
$$
 W_{2} (h; z) ^{2\nu}  \ll  \(2\nu \)^\nu N^{8\nu} y p^{- 4\varepsilon } \log^{15} p
 \ll   \(2\nu \)^\nu N^{8\nu}  p^{- 2\varepsilon}.
$$
Therefore we have  the inequality  
\begin{equation}
\label{eq: Whz-11}
W_{2} (h; z)   \ll  \nu N^{4} p^{- \varepsilon/\nu}.
\end{equation}
Since $z \ge x$ we see that 
$$
\nu \le  \frac{\log p}{\log x} = 1/\eta
$$ 
and recalling the choice of $\eta$
in~\eqref{eq:choice of eta1} we obtain the inequalities 
$$
 \nu \ll \log p \mand 
 p^{\varepsilon/\nu} \ge   p^{\varepsilon\eta}  =  (\log p)^{4}.
$$
Thus the bound~\eqref{eq: Whz-11} implies that,  for any admissible $h$ and $z$,  we have
\begin{equation}
\label{eq: Whz-21}
W_{2} (h; z)   \ll  N^{4}  (\log p)^{-3}.
\end{equation}
Substituting~\eqref{eq: Whz-21} in~\eqref{eq: WzWhz1} we obtain 
$$
W_{2} (z)   \ll  N^{4}  (\log p)^{-2}, 
$$
which together with~\eqref{eq: WrWz1}  gives 
  \begin{equation}
\label{eq:Wr2-fin1}
W_{2}  \ll  N^{4}  (\log p)^{-1}.
 \end{equation}
 
\subsection{Concluding the proof}
Substituting the bounds on $W_1$ and $W_2$ given by~\eqref{eq:Wr1-fin1} and~\eqref{eq:Wr2-fin1}, respectively, into~\eqref{779} we obtain 
  the bound
 \begin{equation}
\label{eq:GrandFinale}
T_\chi (N)  \ll
 \log R \left( N^4 p^{-c_0 \varepsilon^2} + N^4 (\log p)^{-1} \right) 
    + \eta N^4 + N^4/(x\log 2x). 
 \end{equation}
It suffices to recall the value of $R$ (see~\eqref{eq: R triv1}) and $\eta$ and $x$ (see~\eqref{eq:choice of eta1} and~\eqref{eq:choice of xy1}) to complete the proof of  
Theorem~\ref{thm:SumTN1}.

\section{Comments} 

It is easy to know that in our decomposition in \S\ref{sec:prelim}
we can discard integers $d \in \cA_r(N;x,y)$ for an abnormally large $r$ 
and at the cost of a small error term,  truncate the summation in~\eqref{eq: T Ur* U0 N4/x1}
for $R$ replaced say with  a smaller value:
$$
R_0:=100 \log \log N,
$$
Indeed, the contribution of the $n$ which have at least $R_0$ prime factors is negligible since if $\omega (n) \geq R_0$, then
$\tau (n) \geq 2^{R_0}$ and from $\sum_{n\leq N}\tau (n) \sim N\log N$, we deduce that the number of $ \leq N$ with $\omega (n) \geq R_0$
is bounded by the order of magnitude by $N 2^{-R_0}  ( \log N)\, \ll N (\log N)^{-2} $.
Unfortunately this does not improve our final result since the bottleneck comes 
from the choice of $\eta$ and $x$ in our optimisation of~\eqref{eq:GrandFinale}. 

We also note that the bounds of Theorem~\ref{thm:SumTN1} also applies to more general sums 
with $a^{\pm 1} b^{\pm 1} -  u c^{\pm 1}d^{\pm 1}$ for a fixed integer $u \not \equiv 0 \bmod p$ 
and any fixed choice of signs in the exponents.

There are two natural generalisations of our question concerning matrices without square root.

One may  think about the non-solvability of the equation  $A=X^d$ (instead of $A=X^2$), with $X\in \mathcal M_2^* (p)$,
where $d \ge 2$ is a divisor of $p-1$.   Although our present method applies we remark
that for large $d$ several other effects take place in the distribution of $d$-th power non-residues 
as it has been known  since the work Vinogradov~\cite{Vi}, see also~\cite{DaEr,Poll}.

Another interesting direction is towards multidimensional analogues of 
our work, that is, for $n\times n$ invertible matrices modulo $p$ with small entries, 
which are not squares in $\mathcal M_n^*( p)$ (defined fully analogously to 
$\mathcal M_2^*( p)$). 

Finally, we noticed that the ``decomposition'' idea, used in the proof of 
Theorem~\ref{thm:SumTN1} takes its roots in the work of Korolev~\cite{Kor}. Of course, the literature
contains other examples of decomposition techniques to create bilinear forms by force 
(for instance~\cite[p.560]{Fo-Ra}),
but we preferred to follow Korolev's idea~\cite{Kor} because of its elegance.



\section*{Acknowledgements}The authors are very grateful to Satadal Ganguly for posing the question,
 which has been the starting point of this work.  We also thank him for his remarks concerning a previous version of this paper.
 
 This work started during a very enjoyable visit by I.S.  to  
 Institut de Math{\'e}matiques de Jussieu whose hospitality 
 and support are very much appreciated. 
During the preparation of this work I.S. was also supported in part by the  
Australian Research Council Grant  DP200100355.




\appendix
\section{Matrices with square roots} 
\label{app:A2}
 We now present some simple arguments which allow to 
 evaluate $\sharp \mathcal M_2^{\ne  \square}(p)$ explicitly. In fact, just to show the ideas we compress the details related to some exceptional case into the error term instead of calculating their precise contribution (which can be easily accomplished). 
It is easier to work with the complementary  set 
$$
\mathcal M_2^{\ddag} (p):= \mathcal M_2 (p) \setminus \mathcal M_2^{\ne  \square}(p).
$$
We have the decomposition of $\mathcal M_2^{\ddag} (p)$  into two disjoint subsets 
\begin{equation}\label{1780}
\mathcal M_2^{\ddag   }(p)= \{ A \in \mathcal M_2 (p) : A =B^2 \text{ for some } B \in \mathcal M_2 (p)\} \cup \mathcal E (p),
\end{equation}
where $\mathcal E (p) \subseteq \mathcal M_2 (p)$ is such that $\sharp \mathcal E (p) = O (p^3)$. This bound directly follows from the classical one: $$\sharp \mathcal M_2 (p) -\sharp \mathcal M_2^* (p) = O (p^3).$$  Now write~\eqref{1780} as
\begin{equation}\label{1784} 
\mathcal M_2 ^ \ddag (p) :=  \mathcal M_2^{\square}(p)\cup \mathcal E (p).
\end{equation}

Consider the equation 
 \begin{equation}
\label{eq:AB2}
A= B^2
 \end{equation} 
where $A \in \mathcal M_2 (p)$ is given and where the unknown is $B\in  \mathcal M_2 (p)$.
Clearly 
$$
(\det B)^2 = \det A
$$
hence for some $s\in \F_p$ we must have $\det A = s^2$ and thus $\det B = \pm s$.
From  the characteristic equation of $B$
$$
B^2 -  t  B + \det B I_2 = 0
$$
where $t = \Tr B$ and $I_2 \in  \mathcal M_2 (p)$ is the identity matrix,  using~\eqref{eq:AB2},  
we derive $A - t B   \pm s  I_2= 0$ 
and hence 
 \begin{equation}
\label{eq:tBAI}
t B = A \pm s  I_2 .
 \end{equation}
Taking the trace again we arrive to 
$$
t^2 = \Tr A \pm  2s.
$$
We now discard $O(p)$ diagonal matrices $A$ of the form $A = uI_2$, $u \in \F_p$. 
For the remaining matrices we see from~\eqref{eq:tBAI} that $t\ne 0$. 
On the other hand when $s$ and $t$ are fixed then the equation~\eqref{eq:tBAI}
defines  $B$ uniquely and $B^2=A$.
Hence we now conclude that for all but $O(p)$ quadruples $(a,b,c,d) \in \F_p^4$,  
we have
$$
\begin{pmatrix} a & b \\ c  & d \end{pmatrix} \in \mathcal M_2^{\square} (p)
$$
if and only if for some for some $(s,t) \in \F_p \times \F_p^*$   
$$ad- bc = s^2 
\mand a+d + 2s = t^2
$$
We obviously have $p^2/2 +O(p)$ choices for the pair $(s,t^2)$. Denoting $u:=s^2$ and $v: = t^2-2s$
we obtain 
 \begin{equation}
\label{eq:Systeq}
ad-bc =u 
\mand a+d  = v
 \end{equation}

 We now count the number of pairs $(u,v)$ obtained this way, that is the image size of the 
 map $(s,t^2) \mapsto (s^2, t^2-2s)$.

%

We now consider the following two types of pairs  $(s,t^2)$:

\begin{description}
\item[A]  Pairs  $(s,t^2)$ such that $4s=t^2-w^2$ for some  $w\in \F_p^*$. Note that if $(s, t^2)$ is of this type then 
so is $(-s, w^2)$ since $-4s=w^2-t^2$. Furthermore, $(s,t^2)$  and $(-s, w^2)$ are mapped to the same pair 
$(u,v) =  (s^2, t^2-2s) =  ((-s)^2, w^2-2(-s))$ and in fact  they are the only pre-images of this pair $(u,v)$.

\item[B] Pairs  $(s,t^2)$ such that $4s\ne t^2-w^2$ for any  $w\in \F_p^*$. Then one easily checks that in this case
 the pair $(s, t^2)$ is the only one which is 
mapped to $(u,v) = (s^2, t^2-2s)$. 
\end{description}

Clearly the above cases~{\bf{A}} and~{\bf{B}} contains $p^2/4+O(p)$ pairs each, thus they 
 contribute $p^2/8+O(p)$  and $p^2/4+O(p)$  pairwise distinct 
pairs $(u,v)$, respectively. Hence,  in total we obtain $3p^2/8+O(p)$  pairwise distinct pairs $(u,v)$. 
Thereby, the elements of $\mathcal M_2^{\square}(p)$ correspond to 
$3p^2/8+ O(p)$ distinct systems of equations~\eqref{eq:Systeq}. Forming a quadratic equation
$$
x^2  -v x + u+bc = 0
$$
for $(a,d)$ we see that it has a solution if its discriminant $\Delta =v^2 -4(u+bc)$ is a square in $\F_p$.
This gives $(p+1)/2$ possibilities for the product $bc$ and hence $p^2/2 + O(p)$ values for the pair
$(b,c) \in \F_p^2$. There are $O(p)$ pairs corresponding   $\Delta = 0$ 
for which~\eqref{eq:Systeq}  has one solution. Otherwise we obtain
two  distinct pairs $(a,d)$ corresponding to the permutation of the roots of this 
equation. Hence in total, for each of the $3p^2/8 + O(p)$ 
 pairs $(u,v)= (s^2,t^2-2s)$, 
we obtain $2\(p^2/2 +O (p)\) = p^2  + O(p)$ distinct quadruples $(a,b,c,d)$  satisfying~\eqref{eq:Systeq}. Therefore we have
$$
\sharp \mathcal M_2^{\square} (p) = \frac{3}{8} p^4 +O (p^3).
$$
It remains to recall \eqref{1780} and \eqref{1784}  to complete the proof of  ~\eqref{eq: SquareCount}.

\end{document}